 \newtheorem{theorem}{Theorem}[section]
 \newtheorem{corollary}[theorem]{Corollary}
 \newtheorem{lemma}[theorem]{Lemma}
 \theoremstyle{definition}
 \newtheorem{definition}[theorem]{Definition}
 \theoremstyle{remark}
 \newtheorem{rem}[theorem]{Remark}
 \newtheorem*{example}{Example}
 \numberwithin{equation}{section}
\begin{document}

%
%
%
%
%
%
%
%
%

\title[Bi-slant submersions]
 {On bi-slant submersions in complex geometry}

\author[C. Sayar]{Cem Sayar$^1$}
\address{$^1$Istanbul Technical University\\ Faculty of Science and Letters,\\ Department of Mathematics\\ 34469, Maslak /\.{I}stanbul Turkey}
\email{sayarce@itu.edu.tr}
\author[M. A. Akyol]{Mehmet Akif Akyol$^2$}
\address{$^2$Bingol University\\ Faculty of Arts and Sciences,\\ Department of Mathematics\\ 12000, Bing\"{o}l, Turkey}
\email{mehmetakifakyol@bingol.edu.tr}
\author[R. Prasad]{Rajendra Prasad$^3$}
\address{$^3$Lucknow University\\ Department of Mathematics and Astronomy\\ 226007, Uttar Pradesh, Lucknow, India}
\email{rp.manpur@rediffmail.com}




\subjclass{Primary 53C15,
53B20}

\keywords{Riemannian submersion, bi-slant submersion, horizontal distribution, Kaehler manifold}

\date{January 1, 2004}

\begin{abstract}
In the present paper, we introduce bi-slant submersions from almost Hermitian manifolds onto Riemannian manifolds
as a generalization of invariant, anti-invariant, semi-invariant, slant, semi-slant and hemi-slant Riemannian submersions.
We mainly focus on bi-slant submersions from Kaehler manifolds. We provide a proper example of bi-slant submersion, investigate the geometry of foliations
determined by vertical and horizontal distributions, and obtain the geometry of leaves of these distributions. Moreover, we obtain curvature relations between the base space, the total space and the fibres, and find geometric implications of
these relations.
\end{abstract}

\maketitle
\section{Introduction}

The notion of a slant submanifold was introduced by B.-Y. Chen in \cite{Chen0} and first results on slant submanifolds
were collected in his book \cite{Chen2}. After he defined that notion, many geometers were inspired by that fact and have obtained many results on the notion in the different total space.
As a generalization of the notion, J. L. Cabrerizo et. al. defined the notion of bi-slant submanifold in \cite{Cabre} and see also \cite{Carri}.

On the other hand, as an analogue of isometric immersion (Riemannian submanifold), the notion of Riemannian submersion was first introduced by B. O'Neill \cite{O} and A. Gray \cite{Gra} between two Riemannian manifolds. This notion has some aplications in physics and in mathematics. More precisely, Riemannian submersions have  applications in supergravity and superstring theories \cite{IV1,M}, Kaluza-Klein theory \cite{BL,IV} and the Yang-Mills theory \cite{BL1,W1}. B. Watson \cite{Wat} considered submersions between almost Hermitian manifolds by taking account of almost complex structure of total manifold.
In this case,  the vertical and horizontal distributions are invariant.  Afterwards,  almost Hermitian submersions have been extensively studied different subclasses of almost Hermitian manifolds, for example; see \cite{Fa}.

Inspried by B. Watson's article, B. \c{S}ahin introduced anti invariant submersions from
almost Hermitian manifolds onto Riemannian manifolds \cite{Sah}. This notion has opened a new original and effective area in the theory of Riemannian submersions. 
That paper has been a source of inspiration to so many geometers. For example, as a special case of anti-invariant submersion, Lagrangian submersion was studied by
H. M. Tastan \cite{Ta}. Later, several new types of Riemannian submersions were defined and studied such as semi-invariant submersion \cite{Akyol4,cem,Sa}, slant submersion \cite{Er, Gun, Gun1, Sa1}, hemi-slant submersion \cite{Ta3}, semi-slant submersion \cite{Akyol3, Park}, pointwise slant submersion \cite{Lee, Se}, quasi bi-slant submersion \cite{Prasad}, conformal slant submersion \cite{Akyol0,Akyol1} and
conformal semi-slant submersion \cite{Akyol2}. Also, these kinds of submersions were
considered in different kinds of structures such as cosymplectic, Sasakian, Kenmotsu, nearly Kaehler, almost product, para-contact, and et al. Recent developments in the
theory of submersion can be found in the book \cite{baykit}. 

Recently, the first author of the paper and et.al. define Generic submersion in the sense of G. B. Ronsse (see: \cite{Rons}) for the complex context in \cite{Cemp}. We are motivated to fill a gap in the literature by giving the notion of bi-slant submersions in which the fibres consist of two slant distributions. In the present paper, as a special case of the above notion and generalization of invariant, anti-invariant, semi-invariant, slant, semi-slant and hemi-slant Riemannian submersions we introduce bi-slant submersion and investigate the geometry of base space, the total space and the fibres.

The paper is organized as follows. Section 2 includes preliminaries. In section 3 contains the definition of bi-slant submersions, a proper example, the geometry of foliations
determined by vertical and horizontal distributions and the geometry of leaves of these distributions. The last section of this paper includes curvature relations between the base space, the total space and the fibres, and find geometric implications of these relations.

\section{Riemannian submersions}
In this section,  we give necessary background for
Riemannian submersions.\\

Let $(M,g)$ and $(N,g_{\text{\tiny$N$}})$ be
Riemannian manifolds, where $\dim(M)$ is greater than $\dim(N)$. A surjective mapping
$\pi:(M,g)\rightarrow(N,g_{N})$ is called a
\emph{Riemannian submersion}
\cite{O} if\\

\textbf{(S1)} $\pi$ has maximal rank, and \\

\textbf{(S2)} $\pi_{*}$, restricted to $\ker\pi_{*}^{\bot},$ is a linear
isometry.\\

In this case, for each $q\in N$, $\pi^{-1}(q)$ is a $k$-dimensional
submanifold of $M$ and called a \emph{fiber}, where $k=\dim(M)-\dim(N).$
A vector field on $M$ is called \emph{vertical} (resp.
\emph{horizontal}) if it is always tangent (resp. orthogonal) to
fibers. A vector field $X$ on $M$ is called \emph{basic} if $X$ is
horizontal and $\pi$-related to a vector field $X_{*}$ on $N,$ i.e.,
$\pi_{*}X_{p}=X_{*\pi(p)}$ for all $p\in M.$ We will denote by
$\mathcal{V}$ and $\mathcal{H}$ the projections on the vertical
distribution $\ker\pi_{*}$, and the horizontal distribution
$\ker\pi_{*}^{\bot},$ respectively. As usual, the manifold $(M,g)$ is called \emph{total manifold} and
the manifold $(N,g_{N})$ is called \emph{base manifold} of the submersion $\pi:(M,g)\rightarrow(N,g_{N})$.
The geometry of Riemannian
submersions is characterized by O'Neill's tensors $\mathcal{T}$ and
$\mathcal{A}$, defined as follows:
\begin{equation}\label{testequationn}
\mathcal{T}_{U}{V}=\mathcal{V}\nabla_{\mathcal{V}{U}}\mathcal{H}{V}+\mathcal{H}\nabla_{\mathcal{V}{U}}\mathcal{V}{V},
\end{equation}
\begin{equation}\label{testequationnn}
\mathcal{A}_{U}{V}=\mathcal{V}\nabla_{\mathcal{H}{U}}\mathcal{H}{ V}+\mathcal{H}\nabla_{\mathcal{H}{U}}\mathcal{V}{V}
\end{equation}
for any vector fields ${U}$ and ${V}$ on $M,$ where $\nabla$ is the
Levi-Civita connection of $g$. It is easy to see
that $\mathcal{T}_{{U}}$ and $\mathcal{A}_{{U}}$ are skew-symmetric
operators on the tangent bundle of $M$ reversing the vertical and
the horizontal distributions. We now summarize the properties of the
tensor fields $\mathcal{T}$ and $\mathcal{A}$. Let $V,W$ be  vertical
and $X,Y$ be horizontal vector fields on $M$, then we have
\begin{equation}\label{testequation111}
\mathcal{T}_{V}W=\mathcal{T}_{W}V,
\end{equation}
\begin{equation}\label{testequation00}
\mathcal{A}_{X}Y=-\mathcal{A}_{Y}X=\frac{1}{2}\mathcal{V}[X,Y].
\end{equation}
On the other hand, from (\ref{testequationn}) and (\ref{testequationnn}), we obtain
\begin{equation}\label{testequation09}
\nabla_{V}W=\mathcal{T}_{V}W+\hat{\nabla}_{V}W,
\end{equation}
\begin{equation}\label{testequation11}
\nabla_{V}X=\mathrm{T}_{V}X+\mathcal{H}\nabla_{V}X,
\end{equation}
\begin{equation}\label{testequation}
\nabla_{X}V=\mathcal{A}_{X}V+\mathcal{V}\nabla_{X}V,
\end{equation}
\begin{equation}\label{testequation123}
\nabla_{X}Y=\mathcal{H}\nabla_{X}Y+\mathcal{A}_{X}Y,
\end{equation}
where $\hat{\nabla}_{V}W=\mathcal{V}\nabla_{V}W$. If $X$ is basic
\[\mathcal{H}\nabla_{V}X=\mathcal{A}_{X}V.\]
\begin{rem}\label{remark1}
In this paper, we will assume all horizontal vector fields as basic vector fields.
\end{rem}
It is not difficult to observe that $\mathcal{T}$ acts on the fibers
as the second fundamental form while $\mathcal{A}$  acts on the
horizontal distribution and measures of the obstruction to the
integrability of this distribution. For  details on Riemannian
submersions, we refer to O'Neill's paper \cite{O} and to
the book \cite{Fa}.

\section{Bi-slant Submersions}

A manifold $M$ is called an \textit{almost Hermitian manifold} \cite{Yan} if it admits a tensor field $J$ of type (1,1) on itself such that, for any $X,Y \in TM$
\begin{equation} \label{e9}
J^{2}=-I,\quad g(X,Y)=g(JX,JY).
\end{equation}
An almost Hermitian manifold $M$ is called \textit{Kaehler manifold} \cite{Yan} \\if $\forall X,Y \in TM$,
\begin{equation} \label{e10}
(\nabla_{X}J)Y=0,
\end{equation}
where $\nabla$ is the Levi-Civita connection with respect to the Riemannian metric $g$
and $I$ is the identity operator on the tangent bundle $TM$.\\
\begin{definition}\label{dfnbislant}
Let $(M,g,J)$ be a Kaehler manifold and $(N,g_{\text{\tiny$N$}})$ be a Riemannian manifold. A Riemannian submersion $\pi : (M,g,J)\rightarrow (N,g_{N})$ is called a \textit{bi-slant submersion}, if there are two slant distributions $\mathcal{D}^{\theta_{1}}\subset ker\pi_{*}$ and $\mathcal{D}^{\theta_{2}}\subset ker\pi_{*}$ such that
\begin{equation}\label{eqnbislant1}
 ker\pi_{*}=\mathcal{D}^{\theta_{1}}\oplus \mathcal{D}^{\theta_{2}},
\end{equation}
where, $\mathcal{D}^{\theta_{1}}$ and $\mathcal{D}^{\theta_{2}}$ has slant angles $\theta_{1}$ and $\theta_{2}$, respectively. 
\end{definition}

Suppose the dimension of distribution of $\mathcal{D}^{\theta_{1}}$ (resp. $\mathcal{D}^{\theta_{2}}$) is $m_1$ (resp. $m_2$). Then we easily see the following particular cases.
\begin{enumerate}
\item[(a)] If $m_1=0$ and $\theta_2=0$, then $\pi$ is an invariant submersion.

\item[(b)]If $m_1=0$ and $\theta=\frac{\pi}{2},$ then $\pi$ is an anti-invariant submersion.

\item[(c)] If $m_1\neq m_2\neq0,$ $\theta_1=0$ and $\theta_2=\frac{\pi}{2},$ then $\pi$ is a semi-invariant submersion.

\item[(d)] If $m_1=0$ and $0<\theta_2<\frac{\pi}{2},$ then $\pi$ is a proper slant submersion.

\item[(e)] If $m_1\neq m_2\neq0,$ $\theta_1=0$ and $0<\theta_2<\frac{\pi}{2},$ then $\pi$ is a semi-slant submersion.

\item[(e)] If $m_1\neq m_2\neq0,$ $\theta_1=\frac{\pi}{2}$ and $0<\theta_2<\frac{\pi}{2},$ then $\pi$ is a hemi-slant submersion.
\end{enumerate}

If each slant angles are different from either zero or $\frac{\pi}{2}$, then the bi-slant submersion is called a \textit{proper bi-slant submersion}. Now, we present a non-trivial example of bi-slant submersions and demonstrate that the method presented in this paper is effective.

\begin{rem}
In present paper, we assume bi-slant submersion as proper bi-slant submersion i.e. slant angles are from either zero or $\frac{\pi}{2}$.
\end{rem}

\begin{example}
Let $\mathbb{R}^{8}$ be $8-dimensional$ Euclidean space. $\mathbb{R}^{8},J,g$ is a Kaehler manifold with Euclidean metric $g$ on $\mathbb{R}^{8}$ and canonical complex structure $J$.
Consider the map $\pi : \mathbb{R}^{8} \rightarrow \mathbb{R}^{4}$ with
\begin{equation*}
  \pi(x_{1},x_{2},...x_{8})\mapsto (\frac{-x_{1}+x_{4}}{\sqrt{2}},-x_{2},\frac{-\sqrt{3}x_{5}+x_{8}}{2},-x_{6}).
\end{equation*}
Then, we have the Jacobian matrix of $\pi$ has rank $4$. That means $\pi$ is a submersion. So, with some calculations we observe that
\begin{equation*}
  ker\pi_{*}=\mathcal{D}^{\theta_{1}}\oplus \mathcal{D}^{\theta_{2}},
\end{equation*}
where
\begin{equation*}
  \mathcal{D}^{\theta_{1}}=span \{V_{1}=\frac{1}{\sqrt{2}}(\partial x_{1}+\partial x_{4}), V_{2}=\partial x_{3}\}
\end{equation*}
and
\begin{equation*}
  \mathcal{D}^{\theta_{2}}=span \{V_{3}=\frac{1}{2}\partial x_{5}+\frac{\sqrt{3}}{2}\partial x_{8}, V_{4}=\partial x_{7}\}.
\end{equation*}
Moreover, the slant angle of $\mathcal{D}^{\theta_{1}}$ is $\theta_{1}=\frac{\pi}{4}$ and the slant angle of $\mathcal{D}^{\theta_{2}}$ is $\theta_{2}=\frac{\pi}{3}$.
\end{example}
Let $\pi : (M,g,J)\rightarrow (N,g_{N})$ be a bi-slant submersion from a Kaehlerian manifold $M$ onto a Riemannian manifold $N$. Then, for any $V \in ker\pi_{*}$, we put
\begin{equation}\label{decompvervec}
 JV=PV+FV,
\end{equation}
where $PV \in ker\pi_{*}$ and $FV \in ker\pi_{*}^{\perp}$. Also, for any $\xi \in ker\pi_{*}^{\perp}$, we put
\begin{equation}\label{decomphorvec}
 J\xi=\phi \xi +\omega \xi,
\end{equation}
where $\phi \xi \in ker\pi_{*}$ and $\omega \xi \in ker\pi_{*}^{\perp}$. In this case, the horizontal distribution $ker\pi_{*}^{\perp}$ can be decomposed as follows
\begin{equation}\label{eqnbislant2}
 ker\pi_{*}^{\perp}=F\mathcal{D}^{\theta_{1}}\oplus F\mathcal{D}^{\theta_{2}} \oplus \mu,
\end{equation}
where $\mu$ is the orthogonal complementary of $F\mathcal{D}^{\theta_{1}}\oplus F\mathcal{D}^{\theta_{2}}$ in $ ker\pi_{*}^{\perp}$, and it is invariant with respect to the complex structure $J$. \\
By using \eqref{decompvervec} and \eqref{decomphorvec}, we obtain the followings.
\begin{lemma}
Let $\pi$ be a bi-slant submersion from a Kaehlerian manifold $(M,g,J)$ onto a Riemannian manifold $(N,g_{N})$. Then, we have
\begin{equation*}
   \textbf{(a)}P\mathcal{D}^{\theta_{1}}\subset \mathcal{D}^{\theta_{1}},\quad
 \textbf{ (b)}P\mathcal{D}^{\theta_{2}}\subset \mathcal{D}^{\theta_{2}},\quad
  \textbf{(c)}\phi \mu=\{0\},\quad
  \textbf{(d)}\omega \mu = \mu.
\end{equation*}
\end{lemma}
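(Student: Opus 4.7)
The plan is to split the lemma into two essentially independent pieces: parts (c) and (d) are direct consequences of how $\mu$ was constructed, while parts (a) and (b) use the interplay between the slant characterization and the orthogonal splitting $\ker\pi_* = \mathcal{D}^{\theta_1}\oplus\mathcal{D}^{\theta_2}$.

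For (c) and (d) I would simply unpack definitions. By construction $\mu$ is the $J$-invariant orthogonal complement of $F\mathcal{D}^{\theta_1}\oplus F\mathcal{D}^{\theta_2}$ in $\ker\pi_*^\perp$, so $J\xi\in\mu\subset\ker\pi_*^\perp$ for every $\xi\in\mu$. Writing $J\xi = \phi\xi + \omega\xi$ with $\phi\xi\in\ker\pi_*$ and $\omega\xi\in\ker\pi_*^\perp$, uniqueness of the decomposition forces $\phi\xi=0$, which is (c); then $\omega\xi = J\xi$ and, since $J$ is an isomorphism of $\mu$, we conclude $\omega\mu = J\mu = \mu$, which is (d).

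For (a) and (b) the approach is spectral. Using \eqref{e9} one checks immediately that $P$, regarded as an endomorphism of $\ker\pi_*$, is skew-symmetric with respect to $g$, so $P^2$ is self-adjoint with non-positive spectrum. I would then recall the standard Chen-type characterization of a slant distribution: polarizing the norm identity $|PV_i|=\cos\theta_i\,|V_i|$ for $V_i\in\mathcal{D}^{\theta_i}$ and combining it with skew-symmetry of $P$ yields $P^2 V_i = -\cos^2\theta_i\,V_i$ on each $\mathcal{D}^{\theta_i}$. For a proper bi-slant submersion $\theta_1\neq\theta_2$, so $\mathcal{D}^{\theta_1}$ and $\mathcal{D}^{\theta_2}$ are precisely the two eigenspaces of $P^2$ on $\ker\pi_*$. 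Given $V\in\mathcal{D}^{\theta_1}$, applying $P$ to $P^2 V = -\cos^2\theta_1 V$ gives $P^2(PV) = -\cos^2\theta_1(PV)$, so $PV$ lies in the $-\cos^2\theta_1$-eigenspace, namely $\mathcal{D}^{\theta_1}$; this proves (a), and (b) follows by interchanging the roles of the two distributions.

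The step I expect to have to handle most carefully is the characterization $P^2|_{\mathcal{D}^{\theta_i}} = -\cos^2\theta_i\,I$ starting from the purely angular slant condition. The standard argument polarizes $g(PV,PV) = \cos^2\theta_i\,g(V,V)$ to get $g(PV,PW) = \cos^2\theta_i\,g(V,W)$ for $V,W\in\mathcal{D}^{\theta_i}$, and then uses skew-symmetry of $P$ on $\ker\pi_*$ to transfer this into the desired eigenvalue identity; this is the only place where the slant hypothesis on each individual distribution really enters, and it is what ultimately prevents any cross-term between $\mathcal{D}^{\theta_1}$ and $\mathcal{D}^{\theta_2}$ from appearing in $PV$.
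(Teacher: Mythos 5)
Your treatment of (c) and (d) is correct and is what the paper intends: $\mu$ is by construction the $J$-invariant orthogonal complement of $F\mathcal{D}^{\theta_{1}}\oplus F\mathcal{D}^{\theta_{2}}$ in $\ker\pi_{*}^{\perp}$, so $J\xi\in\ker\pi_{*}^{\perp}$ forces $\phi\xi=0$ and $\omega\xi=J\xi$, whence $\omega\mu=J\mu=\mu$. (The paper offers no proof of this lemma at all --- it is stated as an immediate consequence of \eqref{decompvervec} and \eqref{decomphorvec} --- so there is no argument of the authors to compare yours against.)

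For (a) and (b), however, your spectral argument has a genuine gap at the step where you declare $\mathcal{D}^{\theta_{1}}$ and $\mathcal{D}^{\theta_{2}}$ to be the eigenspaces of $P^{2}$. Polarizing $\|PV\|=\cos\theta_{1}\|V\|$ and using the skew-symmetry of $P$ on $\ker\pi_{*}$ yields $g(P^{2}V,W)=-\cos^{2}\theta_{1}\,g(V,W)$ only for $V,W$ both in $\mathcal{D}^{\theta_{1}}$; this pins down the $\mathcal{D}^{\theta_{1}}$-component of $P^{2}V$ but says nothing about its $\mathcal{D}^{\theta_{2}}$-component. To conclude $P^{2}V=-\cos^{2}\theta_{1}V$ you would also need $g(P^{2}V,U)=-g(PV,PU)=0$ for every $U\in\mathcal{D}^{\theta_{2}}$, i.e.\ $P\mathcal{D}^{\theta_{1}}\perp P\mathcal{D}^{\theta_{2}}$, which is essentially the statement you are trying to prove. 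A self-adjoint operator whose two diagonal blocks with respect to an orthogonal splitting are $-\cos^{2}\theta_{1}I$ and $-\cos^{2}\theta_{2}I$ may still have a nonzero off-diagonal block, so the purely angular slant hypothesis does not by itself exclude a component of $PV$ in the other distribution. This is a known subtlety: in the closely related quasi-bi-slant setting \cite{Prasad} the conditions $J\mathcal{D}^{\theta_{1}}\perp\mathcal{D}^{\theta_{2}}$ and $J\mathcal{D}^{\theta_{2}}\perp\mathcal{D}^{\theta_{1}}$ are imposed as part of the definition precisely because they do not follow from the slant angles alone. So either parts (a) and (b) must be read as an implicit assumption on the decomposition \eqref{eqnbislant1}, or an additional argument --- supplied neither in your proposal nor in the paper --- is required; as written, your proof of (a) and (b) is circular at this point.
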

With the help of \eqref{e9}, \eqref{decompvervec} and \eqref{decomphorvec} we obtain the following Lemma.
\begin{lemma}\label{general}
Let $\pi$ be a bi-slant submersion from a Kaehlerian manifold $(M,g,J)$ onto a Riemannian manifold $(N,g_{N})$. Then, we have
\begin{equation*}
  \textbf{(a)}\, P^{2}X=-\cos^{2}\theta_{1}X, \quad \textbf{(b)}\, P^{2}U=-\cos^{2}\theta_{2}U,
\end{equation*}
\begin{equation*}
\textbf{(c)}\, \phi FX=-\sin ^{2}\theta_{1}X, \quad \textbf{(d)}\, \phi FU=-\sin ^{2}\theta_{2}U,
\end{equation*}
\begin{equation*}
  \textbf{(e)}\, P^{2}X+\phi FX=-X, \quad \textbf{(f)}\, P^{2}U+\phi FU=-U,
\end{equation*}
\begin{equation*}
  \textbf{(g)}\, FPX+\omega FX=0, \quad \textbf{(h)}\, FPU+\omega FU=0,
\end{equation*}
for any vector field $X \in \mathcal{D}^{\theta_{1}}$ and $U \in \mathcal{D}^{\theta_{2}}$.
\end{lemma}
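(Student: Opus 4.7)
The plan is to split the eight identities into two groups. Identities \textbf{(e)}--\textbf{(h)} are purely algebraic consequences of $J^{2}=-I$ together with the two decompositions \eqref{decompvervec} and \eqref{decomphorvec}, and require nothing beyond the fact that $P$, $\phi$ land in the vertical distribution while $F$, $\omega$ land in the horizontal. Identities \textbf{(a)} and \textbf{(b)} encode the slant condition itself, and \textbf{(c)}, \textbf{(d)} will fall out for free as differences once the previous identities are in hand.

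I would begin with \textbf{(e)}--\textbf{(h)}. Take a vertical vector $V$ and apply $J$ to $JV = PV + FV$. Since $PV$ is again vertical and $FV$ is horizontal, applying \eqref{decompvervec} to $PV$ and \eqref{decomphorvec} to $FV$ yields
\[
-V \;=\; J^{2}V \;=\; JPV + JFV \;=\; (P^{2}V + FPV) + (\phi FV + \omega FV).
\]
Projecting onto $\ker\pi_{*}$ gives $P^{2}V + \phi FV = -V$, which specializes to \textbf{(e)} for $V \in \mathcal{D}^{\theta_{1}}$ and \textbf{(f)} for $V \in \mathcal{D}^{\theta_{2}}$. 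Projecting onto $\ker\pi_{*}^{\perp}$ gives $FPV + \omega FV = 0$, i.e.\ \textbf{(g)} and \textbf{(h)} respectively.

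For \textbf{(a)} (and symmetrically \textbf{(b)}) the plan is the standard slant-angle computation adapted to the submersion setting. Fix a nonzero $V \in \mathcal{D}^{\theta_{1}}$. Using $g(JV,PV) = g(PV+FV,PV) = \|PV\|^{2}$ together with $\|JV\|=\|V\|$, the definition of the slant angle gives $\cos\theta_{1} = \|PV\|/\|V\|$, so $\|PV\|^{2} = \cos^{2}\theta_{1}\,\|V\|^{2}$ throughout $\mathcal{D}^{\theta_{1}}$. Polarizing this quadratic identity produces $g(PV,PW) = \cos^{2}\theta_{1}\,g(V,W)$ for all $V,W \in \mathcal{D}^{\theta_{1}}$, and combining with the skew-symmetry $g(PV,W) = -g(V,PW)$ (an immediate consequence of $J$ being an isometry with $g(JV,W)=-g(V,JW)$) converts this into $g(P^{2}V,W) = -\cos^{2}\theta_{1}\,g(V,W)$. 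Since the preceding lemma guarantees $P^{2}V \in \mathcal{D}^{\theta_{1}}$, and $\mathcal{D}^{\theta_{1}}$ is orthogonal to both $\mathcal{D}^{\theta_{2}}$ and $\ker\pi_{*}^{\perp}$, this determines $P^{2}V$ uniquely and establishes \textbf{(a)}; the identical argument with $\theta_{2}$ proves \textbf{(b)}.

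Finally, \textbf{(c)} is obtained by subtracting \textbf{(a)} from \textbf{(e)}, and \textbf{(d)} by subtracting \textbf{(b)} from \textbf{(f)}. The one genuine subtlety I would watch is the polarization step in the slant part: it relies on $\|PV\|^{2}/\|V\|^{2}$ being the \emph{same} constant $\cos^{2}\theta_{1}$ for every nonzero $V$ in the distribution, which is precisely the defining property of slantness and has to be invoked rather than derived from the almost-Hermitian structure alone.
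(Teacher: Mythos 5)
Your proof is correct and follows exactly the route the paper intends (the paper omits the argument, merely asserting the lemma follows from \eqref{e9}, \eqref{decompvervec} and \eqref{decomphorvec}): identities \textbf{(e)}--\textbf{(h)} by applying $J$ to $JV=PV+FV$ and splitting into vertical and horizontal parts, and \textbf{(a)}--\textbf{(b)} by the standard slant-angle/polarization computation, with \textbf{(c)}--\textbf{(d)} as differences. Your explicit use of the skew-symmetry of $P$ and of $P\mathcal{D}^{\theta_{i}}\subset\mathcal{D}^{\theta_{i}}$ supplies details the paper leaves implicit, so nothing is missing.
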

We investigate the relation between complex structure $J$ and O'Neill tensors $\mathcal{T}$ and $\mathcal{A}$.
\begin{lemma}\label{lemmagenel}
Let $\pi$ be a bi-slant submersion from a Kaehlerian manifold $(M,g,J)$ onto a Riemannian manifold $(N,g_{N})$. Then, we have
\begin{equation}\label{eq1}
  \phi \mathcal{T}_{X}Y+P\hat{\nabla}_{X}Y=\hat{\nabla}_{X}PY+\mathcal{T}_{X}FY,
\end{equation}
\begin{equation}\label{eq2}
  \omega \mathcal{T}_{X}Y+F\hat{\nabla}_{X}Y=\mathcal{T}_{X}PY+\mathcal{A}_{FY}X,
\end{equation}
\begin{equation}\label{eq3}
  P\mathcal{T}_{X}\xi+\phi \mathcal{A}_{\xi}X=\hat{\nabla}_{X}\phi \xi+\mathcal{T}_{X}\omega \xi,
\end{equation}
\begin{equation}\label{eq4}
  F\mathcal{T}_{X}\xi+\omega \mathcal{A}_{\xi}X=\mathcal{T}_{X}\phi \xi+\mathcal{A}_{\omega \xi}X,
\end{equation}
\begin{equation}\label{eq5}
\phi \mathcal{H}\nabla_{\xi}\eta+P\mathcal{A}_{\xi}\eta=\mathcal{V}\nabla_{\xi}\phi \eta+\mathcal{A}_{\xi}\eta,
\end{equation}
\begin{equation}\label{eq6}
\omega \mathcal{H}\nabla_{\xi}\eta+F\mathcal{A}_{\xi}\eta=\mathcal{A}_{\xi}\phi \eta+\mathcal{H}\nabla_{\xi}\omega \eta,
\end{equation}
for any $U,V \in ker\pi_{*}$ and $\xi, \eta \in ker\pi_{*}^{\perp}$.
\end{lemma}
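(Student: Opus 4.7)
The plan is to apply the Kaehler condition in the form $\nabla_{A}JB=J\nabla_{A}B$ (which follows from \eqref{e10}) for suitable pairs $A,B$, expand the left-hand side using the decompositions \eqref{decompvervec}, \eqref{decomphorvec}, and then split both sides into their vertical and horizontal components using the O'Neill formulas \eqref{testequation09}--\eqref{testequation123}. The six identities fall out in three matched pairs: a vertical-vertical pair producing (\ref{eq1})--(\ref{eq2}), a vertical-horizontal pair producing (\ref{eq3})--(\ref{eq4}), and a horizontal-horizontal pair producing (\ref{eq5})--(\ref{eq6}).

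For (\ref{eq1}) and (\ref{eq2}), I would start with $X,Y\in\ker\pi_{*}$ and compute $\nabla_{X}JY=\nabla_{X}PY+\nabla_{X}FY$. Since $PY$ is vertical, \eqref{testequation09} gives $\nabla_{X}PY=\hat{\nabla}_{X}PY+\mathcal{T}_{X}PY$; since $FY$ is horizontal (and basic by Remark \ref{remark1}), \eqref{testequation11} together with the identity $\mathcal{H}\nabla_{V}X=\mathcal{A}_{X}V$ gives $\nabla_{X}FY=\mathcal{T}_{X}FY+\mathcal{A}_{FY}X$. On the other side, $J\nabla_{X}Y=J(\hat{\nabla}_{X}Y+\mathcal{T}_{X}Y)=P\hat{\nabla}_{X}Y+F\hat{\nabla}_{X}Y+\phi\mathcal{T}_{X}Y+\omega\mathcal{T}_{X}Y$. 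Equating vertical parts yields (\ref{eq1}), and equating horizontal parts yields (\ref{eq2}).

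For (\ref{eq3}) and (\ref{eq4}), I would take $X\in\ker\pi_{*}$ and $\xi\in\ker\pi_{*}^{\perp}$ and expand $\nabla_{X}J\xi=\nabla_{X}\phi\xi+\nabla_{X}\omega\xi$ in the same manner: \eqref{testequation09} applied to $\nabla_{X}\phi\xi$ (since $\phi\xi$ is vertical) and \eqref{testequation11} applied to $\nabla_{X}\omega\xi$, while on the other side $\nabla_{X}\xi=\mathcal{T}_{X}\xi+\mathcal{A}_{\xi}X$ so that $J\nabla_{X}\xi=P\mathcal{T}_{X}\xi+F\mathcal{T}_{X}\xi+\phi\mathcal{A}_{\xi}X+\omega\mathcal{A}_{\xi}X$. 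The vertical and horizontal parts give (\ref{eq3}) and (\ref{eq4}), respectively. Finally, for (\ref{eq5}) and (\ref{eq6}), I would take $\xi,\eta\in\ker\pi_{*}^{\perp}$ and expand $\nabla_{\xi}J\eta=\nabla_{\xi}\phi\eta+\nabla_{\xi}\omega\eta$ using \eqref{testequation} for the vertical term $\phi\eta$ and \eqref{testequation123} for the horizontal term $\omega\eta$, while $\nabla_{\xi}\eta=\mathcal{H}\nabla_{\xi}\eta+\mathcal{A}_{\xi}\eta$, whose image under $J$ decomposes via \eqref{decompvervec} and \eqref{decomphorvec}. Matching the two components on each side produces (\ref{eq5}) and (\ref{eq6}).

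The computations are entirely mechanical; the only subtlety is bookkeeping---for each of the eight covariant derivatives appearing in the three derivations, one must identify which of \eqref{testequation09}--\eqref{testequation123} is appropriate depending on the vertical/horizontal type of the two arguments, and one must invoke the basic-field convention of Remark \ref{remark1} to legitimately rewrite $\mathcal{H}\nabla_{V}\xi$ as $\mathcal{A}_{\xi}V$ whenever a vertical field differentiates a horizontal one. Once this is set up cleanly, separating vertical from horizontal projections in the Kaehler identity is immediate and the six relations follow.
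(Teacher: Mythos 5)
Your proposal is correct and follows essentially the same route as the paper's own proof: apply the Kaehler identity $J\nabla_{A}B=\nabla_{A}JB$ to the three type-pairs of arguments, expand each covariant derivative with the appropriate O'Neill formula \eqref{testequation09}--\eqref{testequation123} (using Remark \ref{remark1} to rewrite $\mathcal{H}\nabla_{V}\xi$ as $\mathcal{A}_{\xi}V$), and separate vertical and horizontal components --- the paper writes out only the first pair and declares the rest analogous, whereas you sketch all three. One incidental observation: your derivation of \eqref{eq5} correctly produces $\mathcal{A}_{\xi}\omega\eta$ as the final term, so the $\mathcal{A}_{\xi}\eta$ printed in the statement appears to be a typographical slip.
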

\begin{proof}
  Let $U$ and $V$ be in $ker\pi_{*}$. Since $M$ is Kaehlerian manifold, we have $J\nabla_{U}V=\nabla_{U}JV$. From \eqref{testequation09}, \eqref{testequation11}, \eqref{decompvervec} and \eqref{decomphorvec}, we obtain
  \begin{eqnarray*}
  J\nabla_{U}V&=&\nabla_{U}PV+\nabla_{U}FV\\
  \Rightarrow J(\mathcal{T}_{U}V+\hat{\nabla}_{U}V)&=&\mathcal{T}_{U}PV+\hat{\nabla}_{U}PV\\
  &+&\mathcal{T}_{U}FV+\mathcal{H}\mathcal{\nabla}_{U}FV.
  \end{eqnarray*}
  \begin{eqnarray*}
  \Rightarrow \phi \mathcal{T}_{U}V+\omega \mathcal{T}_{U}V+P\hat{\nabla}_{U}V+F\hat{\nabla}_{U}V&=& \mathcal{T}_{U}PV+\hat{\nabla}_{U}PV\\
  &+&\mathcal{T}_{U}FV+\mathcal{H}\mathcal{\nabla}_{U}FV.
  \end{eqnarray*}
Then, in the view of Remark \ref{remark1}, considering the vertical and horizontal parts of the last equation gives us \eqref{eq1} and \eqref{eq2}. For the rest of the equations, the same way could be applied.
\end{proof}
Now, we obtain equations which mean Gauss and Weingarten equations for bi-slant submersions.
\begin{lemma}\label{GauWei}
Let $\pi$ be a bi-slant submersion from a Kaehlerian manifold $(M,g,J)$ onto a Riemannian manifold $(N,g_{N})$. Then, for any $X,Y \in \mathcal{D}^{\theta_{1}}$ and $U,V \in \mathcal{D}^{\theta_{2}}$, we have
\begin{equation}\label{GauWei1}
  g(\nabla_{X}Y,U)=\csc^{2}\theta_{1}\,g(\mathcal{T}_{PU}FY-\mathcal{T}_{U}FPY+\mathcal{A}_{FU}FY,X),
\end{equation}
\begin{equation}\label{GauWei2}
  g(\nabla_{U}V,X)=\csc^{2}\theta_{2}\,g(\mathcal{T}_{PX}FV-\mathcal{T}_{X}FPV+\mathcal{A}_{FX}FV,U).
\end{equation}
\end{lemma}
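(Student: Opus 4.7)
The plan is to prove the first identity by direct Kaehler calculus, after which the second follows by a symmetric argument obtained by interchanging the roles of $(X,Y,\theta_1)$ and $(U,V,\theta_2)$. The key algebraic trick is to combine Lemma \ref{general}(c), which gives $\phi FY = -\sin^2\theta_1 Y$, with Lemma \ref{general}(g), $\omega FY = -FPY$, to rewrite
\[
\sin^2\theta_1\, Y = -\phi FY = -(JFY - \omega FY) = -JFY - FPY.
\]
Since $\sin^2\theta_1$ is constant, applying $\nabla_X$ and using Kaehler parallelism $\nabla_X J = J\nabla_X$ then yields $\sin^2\theta_1\, \nabla_X Y = -J\nabla_X FY - \nabla_X FPY$.

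Next I would pair both sides with $U$ and use the compatibility $g(J\,\cdot\,,\,\cdot\,) = -g(\,\cdot\,, J\,\cdot\,)$ together with $JU = PU + FU$ to obtain
\[
\sin^2\theta_1\, g(\nabla_X Y, U) = g(\nabla_X FY, PU) + g(\nabla_X FY, FU) - g(\nabla_X FPY, U).
\]
Each of the three terms is then reduced via \eqref{testequation11}, Remark \ref{remark1}, and the standard algebraic properties of $\mathcal{T}$ and $\mathcal{A}$. For $g(\nabla_X FY, PU)$ and $g(\nabla_X FPY, U)$, whose second slot is vertical, only the $\mathcal{T}$-component of $\nabla_X(\cdot)$ contributes; skew-symmetry of $\mathcal{T}_{\bullet}$ combined with the vertical symmetry $\mathcal{T}_V W = \mathcal{T}_W V$ then moves $X$ into the second slot, converting these into $g(\mathcal{T}_{PU} FY, X)$ and $g(\mathcal{T}_U FPY, X)$ respectively. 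For $g(\nabla_X FY, FU)$, only the horizontal part $\mathcal{H}\nabla_X FY = \mathcal{A}_{FY} X$ survives, and skew-symmetry of $\mathcal{A}_{\bullet}$ together with $\mathcal{A}_{FY} FU = -\mathcal{A}_{FU} FY$ turns it into $g(\mathcal{A}_{FU} FY, X)$. Dividing through by $\sin^2\theta_1$ yields the first identity.

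The second identity is obtained by the same scheme, now invoking Lemma \ref{general}(d), (h) to write $\sin^2\theta_2\, V = -JFV - FPV$ and pairing with $X$ under $JX = PX + FX$; the three resulting pieces become $g(\mathcal{T}_{PX} FV, U)$, $g(\mathcal{T}_X FPV, U)$ and $g(\mathcal{A}_{FX} FV, U)$. The main difficulty I expect is purely bookkeeping: several applications of skew-symmetry of $\mathcal{T}_{\bullet}$ and $\mathcal{A}_{\bullet}$, of the identity $\mathcal{A}_H K = -\mathcal{A}_K H$, and of the vertical symmetry of $\mathcal{T}$ must be chained together without sign errors so that $X$ (respectively $U$) really ends up in the correct slot of every inner product on the right-hand side.
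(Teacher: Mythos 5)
Your proposal is correct and is essentially the paper's own argument: both reduce the claim to the identity $\sin^{2}\theta_{1}\,g(\nabla_{X}Y,U)=g(\nabla_{X}FY,PU)+g(\nabla_{X}FY,FU)-g(\nabla_{X}FPY,U)$ and then convert each term via \eqref{testequation09}, \eqref{testequation11}, Remark \ref{remark1} and the (skew-)symmetries of $\mathcal{T}$ and $\mathcal{A}$. The only cosmetic difference is that you generate the $\sin^{2}\theta_{1}$ factor from Lemma \ref{general}(c),(g) at the vector level, while the paper inserts $J$ twice into the metric and uses Lemma \ref{general}(a), moving the resulting $\cos^{2}\theta_{1}\,g(\nabla_{X}Y,U)$ term to the left-hand side; your write-up actually makes the final bookkeeping (which the paper leaves implicit) explicit and sign-correct.
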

\begin{proof}
  Assume that $X,Y$ be in $\mathcal{D}^{\theta_{1}}$ and $U,V$ be in $\mathcal{D}^{\theta_{2}}$. Then, from \eqref{e9}, \eqref{e10} and \eqref{decompvervec}, we have
  \begin{eqnarray*}
    g(\nabla_{X}Y,U)&=&g(\nabla_{X}JY,JU)\\
&=&g(\nabla_{X}PY,JU)+g(\nabla_{X}FY,JU).
  \end{eqnarray*}
  With the help of \eqref{e9} and \eqref{decompvervec}, we obtain
  \begin{eqnarray*}
 \Rightarrow g(\nabla_{X}Y,U)&=&-g(\nabla_{X}P^{2}Y,U)-g(\nabla_{X}FPY,U)\\
  &+&g(\nabla_{X}FY,PU)+g(\nabla_{X}FY,FY).
  \end{eqnarray*}
  By Lemma \ref{general}-(a), Remark \ref{remark1}, \eqref{testequation09} and \eqref{testequation11}, we get
  \begin{eqnarray*}
\Rightarrow g(\nabla_{X}Y,U)&=&\cos^{2}\theta_{1}\,g(\nabla_{X}Y,U)-g(\mathcal{T}_{X}FPY,U)\\
&+&g(\mathcal{T}_{X}FY,PU)+g(\mathcal{A}_{FY},FU).
  \end{eqnarray*}
If we edit the last equation and take into account the properties of O'Neill tensors $\mathcal{T}$ and $\mathcal{A}$, we get \eqref{GauWei1}. To obtain \eqref{GauWei2}, the same idea can be used.
\end{proof}
\subsection{Integrability}
In this section, we investigate the integrability of the distributions which are mentioned in the definition of bi-slant submersion.
\begin{theorem}
  Let $\pi$ be a bi-slant submersion from a Kaehlerian manifold $(M,g,J)$ onto a Riemannian manifold $(N,g_{N})$. Then, the slant distribution $\mathcal{D}^{\theta_{1}}$ is integrable if and only if
  \begin{equation*}
    g(\mathcal{T}_{PU}FY-\mathcal{T}_{U}FPY+\mathcal{A}_{FU}FY,X)=g(\mathcal{T}_{PU}FX-\mathcal{T}_{U}FPX+\mathcal{A}_{FU}FX,Y),
  \end{equation*}
  where $X,Y \in \mathcal{D}^{\theta_{1}}$ and $U \in \mathcal{D}^{\theta_{2}}$.
\end{theorem}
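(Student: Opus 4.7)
The plan is to exploit the decomposition $\ker\pi_{*}=\mathcal{D}^{\theta_{1}}\oplus\mathcal{D}^{\theta_{2}}$ and reduce the integrability condition to a single inner product, then apply Lemma \ref{GauWei} twice.

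First I would observe that since the fibres of a Riemannian submersion are submanifolds of $M$, the vertical distribution $\ker\pi_{*}$ is automatically integrable; in particular, for any $X,Y\in\mathcal{D}^{\theta_{1}}\subset\ker\pi_{*}$ the Lie bracket $[X,Y]$ is already vertical. Hence, by the direct-sum decomposition in \eqref{eqnbislant1}, the distribution $\mathcal{D}^{\theta_{1}}$ is integrable if and only if the $\mathcal{D}^{\theta_{2}}$-component of $[X,Y]$ vanishes, i.e.\
\[
g([X,Y],U)=0\qquad\text{for all }X,Y\in\mathcal{D}^{\theta_{1}},\ U\in\mathcal{D}^{\theta_{2}}.
\]

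Next I would expand $g([X,Y],U)=g(\nabla_{X}Y,U)-g(\nabla_{Y}X,U)$ using the Levi-Civita connection and then apply identity \eqref{GauWei1} of Lemma \ref{GauWei} to each summand. For the first summand this gives directly
\[
g(\nabla_{X}Y,U)=\csc^{2}\theta_{1}\,g(\mathcal{T}_{PU}FY-\mathcal{T}_{U}FPY+\mathcal{A}_{FU}FY,X),
\]
and swapping the roles of $X$ and $Y$ yields the analogous formula for $g(\nabla_{Y}X,U)$. Subtracting the two expressions, the common factor $\csc^{2}\theta_{1}\neq 0$ (we are in the proper bi-slant case, so $\theta_{1}\neq 0$) may be divided out, and the integrability condition becomes precisely the stated equality.

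The main obstacle, if any, is conceptual rather than computational: I need to be sure that Lemma \ref{GauWei} is applicable with $Y$ replaced by $X$ on the right-hand side, which is immediate because the identity \eqref{GauWei1} was derived for arbitrary $X,Y\in\mathcal{D}^{\theta_{1}}$ and $U\in\mathcal{D}^{\theta_{2}}$ with no symmetry assumption between $X$ and $Y$. Once the two applications of \eqref{GauWei1} are in hand, the remainder of the argument is a one-line subtraction, and no further use of the tensors $P$, $F$, $\phi$, $\omega$ or of the Kaehler condition is required — these have already been absorbed into Lemma \ref{GauWei}. Thus the proof reduces, essentially, to bookkeeping: identify the correct vertical condition, apply the previous lemma symmetrically, and cancel the nonzero factor.
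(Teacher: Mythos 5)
Your proposal is correct and follows essentially the same route as the paper: both reduce integrability of $\mathcal{D}^{\theta_{1}}$ to the vanishing of $g([X,Y],U)$ for $U\in\mathcal{D}^{\theta_{2}}$ (using that $[X,Y]$ is automatically vertical) and then apply \eqref{GauWei1} to each term of $g(\nabla_{X}Y,U)-g(\nabla_{Y}X,U)$. Your write-up is in fact slightly more careful than the paper's, since it makes explicit the verticality of the bracket and the nonvanishing of the factor $\csc^{2}\theta_{1}$.
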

\begin{proof}
  Let $X,Y \in \mathcal{D}^{\theta_{1}}$ and $U \in \mathcal{D}^{\theta_{2}}$. Then, by \eqref{GauWei1}, we get
  \begin{eqnarray*}
g([X,Y],U)&=&g(\nabla_{X}Y,U)-g(\nabla_{Y}X,U)\\
&=&\csc^{2}\theta_{1}\big\{ g(\mathcal{T}_{PU}FY-\mathcal{T}_{U}FPY+\mathcal{A}_{FU}FY,X)\\
&-&g(\mathcal{T}_{PU}FX-\mathcal{T}_{U}FPX+\mathcal{A}_{FU}FX,Y)\big\}.
  \end{eqnarray*}
Therefore, the slant distribution $\mathcal{D}^{\theta_{1}}$ is integrable if and only if $[X,Y] \in \mathcal{D}^{\theta_{1}}$, for any $X,Y \in \mathcal{D}^{\theta_{1}}$. So we obtain the assertion.
\end{proof}
\begin{theorem}
  Let $\pi$ be a bi-slant submersion from a Kaehlerian manifold $(M,g,J)$ onto a Riemannian manifold $(N,g_{N})$. Then, the slant distribution $\mathcal{D}^{\theta_{2}}$ is integrable if and only if
  \begin{equation*}
    g(\mathcal{T}_{PX}FU-\mathcal{T}_{X}FPU+\mathcal{A}_{FX}FU,V)=g(\mathcal{T}_{PX}FV-\mathcal{T}_{X}FPV+\mathcal{A}_{FX}FV,U),
  \end{equation*}
  where $X \in \mathcal{D}^{\theta_{1}}$ and $U,V \in \mathcal{D}^{\theta_{2}}$.
\end{theorem}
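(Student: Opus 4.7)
The plan is to mirror the proof of the preceding theorem, exchanging the roles of $\mathcal{D}^{\theta_1}$ and $\mathcal{D}^{\theta_2}$ and invoking the companion Gauss-type identity \eqref{GauWei2} in place of \eqref{GauWei1}.

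First I would reduce integrability to a single scalar criterion. Since $\ker\pi_*$ is always integrable (its integral manifolds are the fibers of $\pi$), for any $U,V \in \mathcal{D}^{\theta_2}$ the bracket $[U,V]$ lies in $\ker\pi_* = \mathcal{D}^{\theta_1} \oplus \mathcal{D}^{\theta_2}$. Hence $\mathcal{D}^{\theta_2}$ is integrable if and only if the $\mathcal{D}^{\theta_1}$-component of $[U,V]$ vanishes, which by the orthogonal decomposition is equivalent to
\[
g([U,V],X) = 0 \quad \text{for every } X \in \mathcal{D}^{\theta_1}.
\]

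Next I would expand $g([U,V],X) = g(\nabla_U V, X) - g(\nabla_V U, X)$ and apply \eqref{GauWei2} to each term, with $(U,V)$ and $(V,U)$ respectively. Since $\csc^2\theta_2$ is a nonzero common factor, this produces
\[
g([U,V],X) = \csc^2\theta_2 \bigl\{ g(\mathcal{T}_{PX}FV - \mathcal{T}_X FPV + \mathcal{A}_{FX}FV, U) - g(\mathcal{T}_{PX}FU - \mathcal{T}_X FPU + \mathcal{A}_{FX}FU, V) \bigr\}.
\]
The vanishing of this expression for every $X \in \mathcal{D}^{\theta_1}$ is precisely the stated identity, yielding the equivalence.

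Since Lemma \ref{GauWei} does all of the genuine geometric work (trading a Levi-Civita derivative for the O'Neill tensors via the Kaehler identity $J\nabla = \nabla J$ and the fact that $\theta_2 \notin \{0, \pi/2\}$ so $\csc^2\theta_2$ is defined), there is no serious obstacle here; the argument is purely formal antisymmetrization. The only thing worth double-checking is the justification that $[U,V]$ has no horizontal component, which, as noted above, follows from the integrability of the vertical distribution of any Riemannian submersion and so requires no further appeal to the almost Hermitian structure.
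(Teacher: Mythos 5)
Your proof is correct and follows essentially the same route as the paper: antisymmetrize the Gauss-type identity \eqref{GauWei2} in $U$ and $V$ to compute $g([U,V],X)$, and observe that $\csc^2\theta_2 \neq 0$ makes the vanishing of this quantity for all $X \in \mathcal{D}^{\theta_1}$ equivalent to the stated condition. Your added remark that $[U,V]$ is automatically vertical (so only the $\mathcal{D}^{\theta_1}$-component needs to be killed) is a detail the paper leaves implicit, and it is correctly justified.
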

\begin{proof}
   Let $X \in \mathcal{D}^{\theta_{1}}$ and $U,V \in \mathcal{D}^{\theta_{2}}$. Then, from \eqref{GauWei2}, we get
   \begin{eqnarray*}
g([U,V],X)&=&g(\nabla_{U}V,X)-g(\nabla_{V}U,X)\\
&=&\csc^{2}\theta_{2}\big\{ g(\mathcal{T}_{PX}FV-\mathcal{T}_{X}FPV+\mathcal{A}_{FX}FV,U)\\
&-&g(\mathcal{T}_{PX}FU-\mathcal{T}_{X}FPU+\mathcal{A}_{FX}FU,V)\big\}.
  \end{eqnarray*}
So, the assertion is obtained.
\end{proof}
\subsection{Totally and Mixed Geodesicness}
In this section, we investigate the geometry of the fibers, vertical distribution and horizontal distribution for a bi-slant submersion.
\begin{theorem}\label{GEOD1}
   Let $\pi$ be a bi-slant submersion from a Kaehlerian manifold $(M,g,J)$ onto a Riemannian manifold $(N,g_{N})$. Then, the slant distribution $\mathcal{D}^{\theta_{1}}$ defines a totally geodesic foliation on $ker\pi_{*}$ if and only if the following condition holds;
   \begin{equation}\label{geod1}
     g(\mathcal{T}_{PU}FY-\mathcal{T}_{U}FPY+\mathcal{A}_{FU}FY,X)=0,
   \end{equation}
   where $X,Y \in \mathcal{D}^{\theta_{1}}$ and $U \in \mathcal{D}^{\theta_{2}}$.
\end{theorem}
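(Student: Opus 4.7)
The plan is to reduce the statement directly to the Gauss-type formula \eqref{GauWei1} already proved in Lemma \ref{GauWei}. The key interpretation is that $\mathcal{D}^{\theta_1}$ defining a totally geodesic foliation on $\ker\pi_{*}$ means, by definition, that $\hat{\nabla}_X Y \in \mathcal{D}^{\theta_1}$ for every $X, Y \in \mathcal{D}^{\theta_1}$, where $\hat{\nabla}$ is the induced (intrinsic) Levi-Civita connection on the fibres. Since $\ker\pi_{*} = \mathcal{D}^{\theta_1}\oplus \mathcal{D}^{\theta_2}$ is an orthogonal decomposition, the inclusion $\hat{\nabla}_X Y \in \mathcal{D}^{\theta_1}$ is equivalent to the vanishing of $g(\hat{\nabla}_X Y, U)$ for every $U \in \mathcal{D}^{\theta_2}$.

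Next, I would eliminate $\hat{\nabla}$ in favour of the ambient connection $\nabla$. By \eqref{testequation09} we have $\nabla_X Y = \mathcal{T}_X Y + \hat{\nabla}_X Y$, and since $\mathcal{T}_X Y$ is horizontal (as $\mathcal{T}$ reverses the vertical and horizontal distributions) while $U$ is vertical, the term $g(\mathcal{T}_X Y, U)$ vanishes. Hence
\begin{equation*}
g(\hat{\nabla}_X Y, U) \;=\; g(\nabla_X Y, U),
\end{equation*}
so the foliation condition becomes $g(\nabla_X Y, U) = 0$ for all $X, Y \in \mathcal{D}^{\theta_1}$ and all $U \in \mathcal{D}^{\theta_2}$.

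At this point Lemma \ref{GauWei}, specifically \eqref{GauWei1}, finishes the argument at once: it asserts
\begin{equation*}
g(\nabla_X Y, U) \;=\; \csc^{2}\theta_{1}\,g\bigl(\mathcal{T}_{PU}FY-\mathcal{T}_{U}FPY+\mathcal{A}_{FU}FY,\, X\bigr).
\end{equation*}
Since $\csc^{2}\theta_{1} \ne 0$ (the submersion is proper, so $\theta_1 \notin \{0,\pi/2\}$, and in particular $\sin\theta_1 \ne 0$), the left-hand side vanishes for every choice of $X,Y \in \mathcal{D}^{\theta_1}$ and $U \in \mathcal{D}^{\theta_2}$ if and only if the bracket on the right-hand side, i.e. condition \eqref{geod1}, is satisfied.

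There is no real obstacle here: all the nontrivial content (the Kaehler identity $\nabla J = 0$, the decomposition \eqref{decompvervec}-\eqref{decomphorvec}, and the slant relations from Lemma \ref{general}) has already been packaged into \eqref{GauWei1}. The only subtle point to flag in the write-up is the clean identification of \emph{totally geodesic foliation on the vertical distribution} with the condition on $\hat{\nabla}$ rather than on the ambient $\nabla$, which is what legitimately allows us to discard the $\mathcal{T}_X Y$ term before invoking \eqref{GauWei1}.
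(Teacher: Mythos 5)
Your proof is correct and follows essentially the same route as the paper: both reduce the totally geodesic condition to $g(\hat{\nabla}_X Y, U)=g(\nabla_X Y, U)=0$ via \eqref{testequation09} and then invoke \eqref{GauWei1}, discarding the nonzero factor $\csc^{2}\theta_{1}$. Your write-up is in fact slightly more careful than the paper's, which omits the explicit remark that $\mathcal{T}_X Y$ is horizontal and that $\csc^{2}\theta_{1}\neq 0$ for a proper bi-slant submersion.
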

\begin{proof}
Let $X,Y \in \mathcal{D}^{\theta_{1}}$ and $U \in \mathcal{D}^{\theta_{2}}$. From \eqref{testequation09} and \eqref{GauWei1}, we have
\begin{eqnarray*}
g(\hat{\nabla}_{X}Y,U)&=&g(\nabla_{X}Y,U)\\
&=&\csc^{2}\theta_{1}\,g(\mathcal{T}_{PU}FY-\mathcal{T}_{U}FPY+\mathcal{A}_{FU}FY,X).
\end{eqnarray*}
So,  the slant distribution $\mathcal{D}^{\theta_{1}}$ defines a totally geodesic foliation on $ker\pi_{*}$ if and only if $\hat{\nabla}_{X}Y \in \mathcal{D}^{\theta_{1}}$ i.e. $g(\mathcal{T}_{PU}FY-\mathcal{T}_{U}FPY+\mathcal{A}_{FU}FY,X)$.
\end{proof}
\begin{theorem}\label{GEOD2}
   Let $\pi$ be a bi-slant submersion from a Kaehlerian manifold $(M,g,J)$ onto a Riemannian manifold $(N,g_{N})$. Then, the slant distribution $\mathcal{D}^{\theta_{2}}$ defines a totally geodesic foliation on $ker\pi_{*}$ if and only if the following condition holds;
    \begin{equation}\label{geod2}
     g(\mathcal{T}_{PX}FV-\mathcal{T}_{X}FPV+\mathcal{A}_{FX}FV,U)=0,
   \end{equation}
   where $X \in \mathcal{D}^{\theta_{1}}$ and $U,V \in \mathcal{D}^{\theta_{2}}$.
\end{theorem}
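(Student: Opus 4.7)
The plan is to mirror the argument used for Theorem \ref{GEOD1}, exploiting the symmetry between the two slant distributions and the already-established Gauss-type identity \eqref{GauWei2}. The distribution $\mathcal{D}^{\theta_{2}}$ defines a totally geodesic foliation on $\ker\pi_{*}$ precisely when, for every $U,V\in\mathcal{D}^{\theta_{2}}$, the vertical covariant derivative $\hat{\nabla}_{U}V$ stays inside $\mathcal{D}^{\theta_{2}}$. Since $\ker\pi_{*}=\mathcal{D}^{\theta_{1}}\oplus\mathcal{D}^{\theta_{2}}$ is an orthogonal decomposition, this is equivalent to the vanishing of $g(\hat{\nabla}_{U}V,X)$ for all $X\in\mathcal{D}^{\theta_{1}}$.

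First I would fix $X\in\mathcal{D}^{\theta_{1}}$ and $U,V\in\mathcal{D}^{\theta_{2}}$, and rewrite $g(\hat{\nabla}_{U}V,X)$ in terms of the ambient Levi-Civita connection. By \eqref{testequation09} we have $\nabla_{U}V=\mathcal{T}_{U}V+\hat{\nabla}_{U}V$, and since $\mathcal{T}_{U}V$ is horizontal while $X$ is vertical (being in $\mathcal{D}^{\theta_{1}}\subset\ker\pi_{*}$), the cross term vanishes and
\[
g(\hat{\nabla}_{U}V,X)=g(\nabla_{U}V,X).
\]

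Next I would invoke the identity \eqref{GauWei2} from Lemma \ref{GauWei}, which is tailored exactly to this pairing and gives
\[
g(\nabla_{U}V,X)=\csc^{2}\theta_{2}\,g\bigl(\mathcal{T}_{PX}FV-\mathcal{T}_{X}FPV+\mathcal{A}_{FX}FV,U\bigr).
\]
Combining the two displays yields
\[
g(\hat{\nabla}_{U}V,X)=\csc^{2}\theta_{2}\,g\bigl(\mathcal{T}_{PX}FV-\mathcal{T}_{X}FPV+\mathcal{A}_{FX}FV,U\bigr).
\]
Because $\theta_{2}\notin\{0\}$ in the proper bi-slant setting the factor $\csc^{2}\theta_{2}$ is a nonzero scalar, so $g(\hat{\nabla}_{U}V,X)=0$ for all admissible $X,U,V$ if and only if the stated relation \eqref{geod2} holds. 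This proves both implications simultaneously.

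There is essentially no obstacle here, since all the heavy lifting has already been done in Lemma \ref{GauWei}; the only point that requires a tiny care is the $\mathcal{T}_{U}V$-term dropping out in the first step, which relies solely on $X$ lying in the vertical distribution. The argument is verbatim the one used for Theorem \ref{GEOD1} with the roles of $\mathcal{D}^{\theta_{1}}$ and $\mathcal{D}^{\theta_{2}}$ (and accordingly $\theta_{1}\leftrightarrow\theta_{2}$) exchanged, so even the writeup can be obtained by symmetry.
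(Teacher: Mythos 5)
Your proposal is correct and follows the paper's own proof essentially verbatim: reduce $g(\hat{\nabla}_{U}V,X)$ to $g(\nabla_{U}V,X)$ via \eqref{testequation09}, apply \eqref{GauWei2}, and conclude from the nonvanishing of $\csc^{2}\theta_{2}$. The only difference is that you spell out the two small justifications (the horizontality of $\mathcal{T}_{U}V$ and the nonzero factor) that the paper leaves implicit.
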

\begin{proof}
Let $X$ be in $\mathcal{D}^{\theta_{1}}$ and $U$ and $V$ be in $\mathcal{D}^{\theta_{2}}$. Thus, with the help of \eqref{testequation09} and \eqref{GauWei2}, we obtain
\begin{eqnarray*}
g(\hat{\nabla}_{U}V,X)&=&g(\nabla_{U}V,X)\\
&=&\csc^{2}\theta_{2}\,g(\mathcal{T}_{PX}FV-\mathcal{T}_{X}FPV+\mathcal{A}_{FX}FV,U).
\end{eqnarray*}
Therefore, we obtain the assertion.
\end{proof}
In the view of Theorem \ref{GEOD1} and Theorem \ref{GEOD2}, we have the following result.
\begin{corollary}
 Let $\pi$ be a bi-slant submersion from a Kaehlerian manifold $(M,g,J)$ onto a Riemannian manifold $(N,g_{N})$. Then, the vertical distribution $ker\pi_{*}$ is a locally product $M_{\mathcal{D}^{\theta_{1}}}\times M_{\mathcal{D}^{\theta_{2}}}$ if and only if \eqref{geod1} and \eqref{geod2} hold, where $M_{\mathcal{D}^{\theta_{1}}}$ and $M_{\mathcal{D}^{\theta_{2}}}$ are integral manifolds of the distributions $\mathcal{D}^{\theta_{1}}$ and $\mathcal{D}^{\theta_{2}}$, respectively.
\end{corollary}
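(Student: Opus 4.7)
The plan is to deduce this corollary as a direct synthesis of Theorem \ref{GEOD1} and Theorem \ref{GEOD2} via the de Rham-type local decomposition principle: a Riemannian manifold (here, a fiber of $\pi$, equipped with the induced metric) splits locally as a Riemannian product precisely when its tangent bundle decomposes orthogonally into two complementary distributions, each of which is integrable and defines a totally geodesic foliation. Since $\ker\pi_{*} = \mathcal{D}^{\theta_{1}} \oplus \mathcal{D}^{\theta_{2}}$ by the definition of a bi-slant submersion, and this sum is orthogonal, the task reduces to translating the totally geodesic conditions into the two tensorial identities \eqref{geod1} and \eqref{geod2}.

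For the forward implication, I would assume $\ker\pi_{*}$ is locally a product $M_{\mathcal{D}^{\theta_{1}}} \times M_{\mathcal{D}^{\theta_{2}}}$. Then each factor is, by definition of a product, a totally geodesic submanifold of the ambient fiber, so both $\mathcal{D}^{\theta_{1}}$ and $\mathcal{D}^{\theta_{2}}$ define totally geodesic foliations on $\ker\pi_{*}$; Theorems \ref{GEOD1} and \ref{GEOD2} then immediately force \eqref{geod1} and \eqref{geod2}. Conversely, if \eqref{geod1} and \eqref{geod2} hold, applying Theorems \ref{GEOD1} and \ref{GEOD2} in the other direction gives that $\hat{\nabla}_{X}Y \in \mathcal{D}^{\theta_{1}}$ for $X,Y \in \mathcal{D}^{\theta_{1}}$ and $\hat{\nabla}_{U}V \in \mathcal{D}^{\theta_{2}}$ for $U,V \in \mathcal{D}^{\theta_{2}}$, where $\hat{\nabla}$ is the intrinsic connection on the fiber. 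In particular, each distribution is involutive (hence integrable by Frobenius), and each leaf is totally geodesic in the fiber, so the de Rham-type argument yields the local product structure with the leaves as the factors $M_{\mathcal{D}^{\theta_{1}}}$ and $M_{\mathcal{D}^{\theta_{2}}}$.

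The main (and essentially only) subtlety is to make sure the ``totally geodesic foliation on $\ker\pi_{*}$'' in the statements of Theorems \ref{GEOD1} and \ref{GEOD2} is exactly what feeds into the decomposition theorem, i.e.\ that totally geodesicness is measured with respect to the induced fiber connection $\hat{\nabla}$, not the ambient $\nabla$ on $M$. This is automatic from equation \eqref{testequation09}, since for $X,Y$ vertical one has $\nabla_{X}Y = \mathcal{T}_{X}Y + \hat{\nabla}_{X}Y$ and the displayed computation in the proofs of Theorems \ref{GEOD1} and \ref{GEOD2} actually evaluates $g(\hat{\nabla}_{X}Y, U)$. Beyond this bookkeeping, there is no genuine computational obstacle; the corollary is a clean packaging of the two preceding theorems.
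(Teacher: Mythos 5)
Your proposal is correct and follows exactly the route the paper intends: the corollary is presented there as an immediate consequence of Theorems \ref{GEOD1} and \ref{GEOD2} combined with the standard local (de Rham--type) decomposition principle for two complementary orthogonal, totally geodesic foliations. Your additional remark that totally geodesicness is measured with the induced fiber connection $\hat{\nabla}$ via \eqref{testequation09} is a useful clarification, but it does not change the argument.
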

\begin{theorem}\label{VERGEODESIC}
 Let $\pi$ be a bi-slant submersion from a Kaehlerian manifold $(M,g,J)$ onto a Riemannian manifold $(N,g_{N})$. Then, $ker\pi_{*}$ defines a totally geodesic foliation if and only if
 \begin{equation}\label{vergeodesic}
\omega (\mathcal{T}_{W}PZ+\mathcal{A}_{FZ}W)+F(\hat{\nabla}_{W}PZ+\mathcal{T}_{W}FZ)=0,\\
\end{equation}
where $W,Z \in ker\pi_{*}$.
\end{theorem}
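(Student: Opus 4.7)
The plan is to characterize the totally geodesic foliation condition by $\mathcal{H}\nabla_W Z = 0$ for all vertical $W,Z$, and then to expand $\nabla_W Z$ by first transferring through $J$ via the Kaehler identity, so that the structural equations \eqref{decompvervec}, \eqref{decomphorvec} can be used to read off the horizontal part.

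First I would observe that $\ker\pi_*$ defines a totally geodesic foliation precisely when $\nabla_W Z \in \ker\pi_*$ for all $W,Z \in \ker\pi_*$, i.e. its horizontal component vanishes. Since $(M,g,J)$ is Kaehler, \eqref{e10} yields $\nabla_W Z = -J\nabla_W JZ = -J\nabla_W(PZ + FZ)$ after applying the decomposition \eqref{decompvervec}.

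Next I would split the two terms. Because $PZ \in \ker\pi_*$, formula \eqref{testequation09} gives
\begin{equation*}
\nabla_W PZ = \mathcal{T}_W PZ + \hat{\nabla}_W PZ,
\end{equation*}
and because $FZ \in \ker\pi_*^\perp$ and all horizontal fields are assumed basic by Remark \ref{remark1}, formula \eqref{testequation11} together with the identity $\mathcal{H}\nabla_V X = \mathcal{A}_X V$ gives
\begin{equation*}
\nabla_W FZ = \mathcal{T}_W FZ + \mathcal{A}_{FZ} W.
\end{equation*}
Here $\hat{\nabla}_W PZ$ and $\mathcal{T}_W FZ$ are vertical, while $\mathcal{T}_W PZ$ and $\mathcal{A}_{FZ} W$ are horizontal (since $\mathcal{T}$ and $\mathcal{A}$ reverse the two distributions).

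Then I would apply $-J$ and decompose each summand by \eqref{decompvervec}, \eqref{decomphorvec}: on the vertical summands $J = P + F$ and on the horizontal summands $J = \phi + \omega$, with $P,\phi$ contributing vertical parts and $F,\omega$ contributing horizontal parts. Collecting only the horizontal part of $\nabla_W Z$ yields
\begin{equation*}
\mathcal{H}\nabla_W Z \;=\; -\bigl[\omega(\mathcal{T}_W PZ + \mathcal{A}_{FZ} W) + F(\hat{\nabla}_W PZ + \mathcal{T}_W FZ)\bigr],
\end{equation*}
and the desired equivalence \eqref{vergeodesic} follows at once. There is no real obstacle beyond careful bookkeeping of which component of each term lies in $\ker\pi_*$ versus $\ker\pi_*^\perp$; the only subtlety is invoking Remark \ref{remark1} to justify writing $\mathcal{H}\nabla_W FZ = \mathcal{A}_{FZ} W$.
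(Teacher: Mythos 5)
Your proof is correct and takes essentially the same route as the paper's: apply the Kaehler identity $\nabla_{W}Z=-J\nabla_{W}JZ$, expand via \eqref{decompvervec}, \eqref{decomphorvec}, \eqref{testequation09} and \eqref{testequation11} (with Remark \ref{remark1} giving $\mathcal{H}\nabla_{W}FZ=\mathcal{A}_{FZ}W$), and read off the horizontal component. Your bookkeeping of which terms are vertical versus horizontal is in fact slightly more careful than the paper's displayed computation, which contains a harmless sign slip in the $F\mathcal{T}_{W}FZ$ term.
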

\begin{proof}
  Let $W$ and $Z$ be in $ker\pi_{*}$. Then, from \eqref{testequation09}, \eqref{testequation11}, \eqref{e9}, \eqref{decompvervec} and \eqref{decomphorvec}, we obtain
  \begin{eqnarray*}
\nabla_{W}Z&=&-J\nabla_{W}JZ=-J(\nabla_{W}PZ+\nabla_{W}FZ)\\
&=&-J(\mathcal{T}_{W}PZ+\hat{\nabla}_{W}PZ+\mathcal{T}_{W}FZ+\mathcal{A}_{FZ}W)\\
&=&-\phi \mathcal{T}_{W}PZ-\omega \mathcal{T}_{W}PZ-P\hat{\nabla}_{W}PZ-F\hat{\nabla}_{W}PZ\\
& &-P\mathcal{T}_{W}FZ+F\mathcal{T}_{W}FZ-\phi \mathcal{A}_{FZ}W-\omega \mathcal{A}_{FZ}W.
  \end{eqnarray*}
  Thus, it is known that $ker\pi_{*}$ defines a totally geodesic foliation if and only if $\nabla_{W}Z \in ker\pi_{*}$. So, we get the assertion.
\end{proof}
\begin{theorem}\label{HORGEODESIC}
 Let $\pi$ be a bi-slant submersion from a Kaehlerian manifold $(M,g,J)$ onto a Riemannian manifold $(N,g_{N})$. Then, $ker\pi_{*}^{\perp}$ defines a totally geodesic foliation if and only if
 \begin{equation}\label{horgeodesic}
   \phi (\mathcal{A}_{\xi}\phi \eta +\mathcal{H}\nabla_{\xi}\omega \eta)+P(\mathcal{A}_{\xi}\omega \eta+\mathcal{V}\nabla_{\xi}\phi \eta)=0
 \end{equation}
 for any $\xi , \eta \in ker\pi_{*}^{\perp}$.
\end{theorem}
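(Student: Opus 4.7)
The plan is to mimic the argument of Theorem \ref{VERGEODESIC}, swapping the roles of vertical and horizontal. Recall that $\ker\pi_*^\perp$ defines a totally geodesic foliation precisely when $\nabla_\xi \eta \in \ker\pi_*^\perp$ for all horizontal $\xi,\eta$, i.e., when the vertical component $\mathcal{V}\nabla_\xi \eta$ vanishes identically. So the goal is to write $\mathcal{V}\nabla_\xi \eta$ in terms of the tensors $\mathcal{A}$, $\mathcal{H}\nabla$, $\mathcal{V}\nabla$, $P$, $F$, $\phi$, $\omega$ and read off when it is zero.

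First I would use the Kaehler condition \eqref{e10} together with \eqref{e9} to write $\nabla_\xi \eta = -J\nabla_\xi (J\eta)$, then decompose $J\eta = \phi\eta + \omega\eta$ via \eqref{decomphorvec}. Next, since $\phi\eta \in \ker\pi_*$ and $\omega\eta \in \ker\pi_*^\perp$, I would apply \eqref{testequation} to $\nabla_\xi \phi\eta$ and \eqref{testequation123} to $\nabla_\xi \omega\eta$, obtaining
\begin{equation*}
\nabla_\xi \eta = -J\bigl(\mathcal{A}_\xi \phi\eta + \mathcal{V}\nabla_\xi \phi\eta + \mathcal{H}\nabla_\xi \omega\eta + \mathcal{A}_\xi \omega\eta\bigr).
\end{equation*}

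Now I would classify each of the four summands by its distribution. By \eqref{testequation}, $\mathcal{A}_\xi \phi\eta$ is horizontal and $\mathcal{V}\nabla_\xi \phi\eta$ is vertical; by \eqref{testequation123}, $\mathcal{H}\nabla_\xi \omega\eta$ is horizontal and $\mathcal{A}_\xi \omega\eta$ is vertical. Applying $J = \phi + \omega$ on horizontal vectors and $J = P + F$ on vertical vectors via \eqref{decompvervec}--\eqref{decomphorvec}, and extracting only the vertical components (i.e.\ keeping the $\phi(\cdot)$ pieces of horizontal inputs and the $P(\cdot)$ pieces of vertical inputs), I would arrive at
\begin{equation*}
\mathcal{V}\nabla_\xi \eta = -\bigl[\phi(\mathcal{A}_\xi \phi\eta + \mathcal{H}\nabla_\xi \omega\eta) + P(\mathcal{A}_\xi \omega\eta + \mathcal{V}\nabla_\xi \phi\eta)\bigr].
\end{equation*}

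Setting this vertical part to zero yields \eqref{horgeodesic}, and conversely \eqref{horgeodesic} forces $\nabla_\xi \eta$ to be horizontal, giving the totally geodesic foliation. There is no real obstacle here beyond careful bookkeeping of which summands are vertical versus horizontal; the main point to be attentive to is that $\mathcal{A}_\xi$ swaps distributions (sending the vertical $\phi\eta$ to a horizontal vector, and the horizontal $\omega\eta$ to a vertical vector), so after one final application of $J$ only four of the eight resulting terms survive in the vertical projection, and these four assemble cleanly into the claimed expression.
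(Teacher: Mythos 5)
Your argument is correct and is essentially identical to the paper's proof: both write $\nabla_\xi\eta=-J\nabla_\xi J\eta$, decompose via \eqref{decomphorvec}, apply \eqref{testequation} and \eqref{testequation123}, expand $J$ on each summand, and read off the vertical part. Your bookkeeping of which summands are vertical versus horizontal matches the paper's computation exactly.
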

\begin{proof}
  Let $\xi , \eta \in ker\pi_{*}^{\perp}$. With the help of the equations \eqref{testequation}, \eqref{testequation111}, \eqref{e9}, \eqref{decompvervec} and \eqref{decomphorvec}, we get
  \begin{eqnarray*}
\nabla_{\xi}\eta&=&-J\nabla_{\xi}J\eta=-J(\nabla_{\xi}\phi \eta+\nabla_{\xi}\omega \eta)\\
&=&-J(\mathcal{A}_{\xi} \phi \eta +\mathcal{V}\nabla_{\xi}\phi \eta +\mathcal{H}\nabla_{\xi}\omega \eta+\mathcal{A}_{\xi}\omega \eta)\\
&=& -\phi \mathcal{A}_{\xi} \phi \eta - \omega \mathcal{A}_{\xi} \phi \eta - P\mathcal{V}\nabla_{\xi}\phi \eta -F\mathcal{V}\nabla_{\xi}\phi \eta\\
&-& \phi \mathcal{H}\nabla_{\xi}\omega \eta - \omega \mathcal{H}\nabla_{\xi}\omega \eta - P \mathcal{A}_{\xi}\omega \eta- F \mathcal{A}_{\xi}\omega \eta.
  \end{eqnarray*}
  Therefore, from the last equation, $ker\pi_{*}^{\perp}$ defines a totally geodesic foliation if and only if $\phi (\mathcal{A}_{\xi}\phi \eta +\mathcal{H}\nabla_{\xi}\omega \eta)+P(\mathcal{A}_{\xi}\omega \eta+\mathcal{V}\nabla_{\xi}\phi \eta)=0$.
\end{proof}
In the view of Theorem \ref{VERGEODESIC} and Theorem \ref{HORGEODESIC}, we give the following result.
\begin{corollary}
Let $\pi$ be a bi-slant submersion from a Kaehlerian manifold $(M,g,J)$ onto a Riemannian manifold $(N,g_{N})$. Then, the following three facts are equal to each other:
\begin{eqnarray*}
  &\textbf{(i)}& \text{M is a locally product }  M_{ker\pi_{*}} \times M_{ker\pi_{*}^{\perp}}, \\
   &\textbf{(ii)}& \pi \text{ is a totally geodesic map}, \\
  &\textbf{(iii)}& \eqref{vergeodesic} \text{ and } \eqref{horgeodesic} \text{ hold}, \\
\end{eqnarray*}
where $M_{ker\pi_{*}}$ and $M_{ker\pi_{*}^{\perp}}$ are integral manifolds of distributions $ker\pi_{*}$ and $ker\pi_{*}^{*}$, respectively.
\end{corollary}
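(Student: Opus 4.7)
The plan is to establish the three-way equivalence as the chain $(\text{i}) \Leftrightarrow (\text{iii}) \Leftrightarrow (\text{ii})$, leaning on the two immediately preceding theorems together with one standard structural fact about Riemannian submersions.

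My first step will be to rephrase (iii) in geometric language. Theorem \ref{VERGEODESIC} identifies condition \eqref{vergeodesic} with the assertion that $\ker\pi_*$ defines a totally geodesic foliation, and Theorem \ref{HORGEODESIC} does the same for \eqref{horgeodesic} and $\ker\pi_*^\perp$. So (iii) is exactly the statement that the two orthogonal, complementary distributions $\ker\pi_*$ and $\ker\pi_*^\perp$ are each integrable with totally geodesic leaves.

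For $(\text{i}) \Leftrightarrow (\text{iii})$, I would invoke the classical de Rham--type local decomposition: a Riemannian manifold splits locally as a Riemannian product of the leaves of two orthogonal complementary distributions precisely when each distribution is integrable with totally geodesic leaves. Applied to $\ker\pi_*$ and $\ker\pi_*^\perp$, this delivers the equivalence with (i) at once. For $(\text{ii}) \Leftrightarrow (\text{i})$, I would appeal to the well-known characterization (see \cite{Fa,O}) that a Riemannian submersion $\pi$ is a totally geodesic map if and only if both O'Neill tensors vanish, $\mathcal{T}\equiv 0$ and $\mathcal{A}\equiv 0$. The vanishing of $\mathcal{T}$ is equivalent to the fibers being totally geodesic submanifolds of $M$, which is exactly the ``vertical'' half of (i); the vanishing of $\mathcal{A}$ does double duty, since by \eqref{testequation00} it forces integrability of $\ker\pi_*^\perp$ and by \eqref{testequation123} it makes the horizontal leaves totally geodesic.

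The only subtlety I anticipate is being careful with this double role of $\mathcal{A}$ in the horizontal direction: unlike $\mathcal{T}$, which simply records the second fundamental form of the fibers, $\mathcal{A}$ simultaneously encodes non-integrability and the second fundamental form of the (possibly nonexistent) horizontal leaves. Once this is acknowledged, the corollary is essentially an assembly of Theorems \ref{VERGEODESIC} and \ref{HORGEODESIC} with these two standard structural results, and no further computation is required.
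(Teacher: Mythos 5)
Your proposal is correct and follows essentially the same route the paper intends: the corollary is stated there without proof as an immediate consequence of Theorems \ref{VERGEODESIC} and \ref{HORGEODESIC}, combined with the standard facts that a local de Rham splitting is equivalent to both orthogonal complementary distributions defining totally geodesic foliations and that $\pi$ is totally geodesic iff $\mathcal{T}\equiv 0$ and $\mathcal{A}\equiv 0$. Your observation about the double role of $\mathcal{A}$ (integrability via \eqref{testequation00} and total geodesy of the horizontal leaves via \eqref{testequation123}, using the skew-symmetry of $\mathcal{A}_X$) is exactly the point that makes the assembly work, so nothing is missing.
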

\subsection{Parallelism of Canonical Structures}
In this section, we investigate the parallelism of the canonical structures for a bi-slant submersion.\\
Let $\pi$ be a bi-slant submersion from a Kaehlerian manifold $(M,g,J)$ onto a Riemannian manifold $(N,g_{N})$. Then, we define
\begin{eqnarray}
(\nabla_{W}P)Z&=&\hat{\nabla}_{W}PZ-P\hat{\nabla}_{W}Z, \label{pparallel}\\
(\nabla_{W}F)Z&=&\mathcal{H}\nabla_{W}FZ-F\hat{\nabla}_{W}Z,\label{fparallel}\\
(\nabla_{W}\phi)\xi &=&\hat{\nabla}_{W}\phi \xi-\phi \mathcal{H}\nabla_{W}\xi, \label{fiparallel}\\
(\nabla_{W}\omega)\xi &=&\mathcal{H}\nabla_{W}\omega \xi - \omega \mathcal{H}\nabla_{W}\xi, \label{omegaparallel}
\end{eqnarray}
where $W,Z \in ker\pi_{*}$ and $\xi \in ker\pi_{*}^{\perp}$. Then, it is said that
\begin{itemize}
  \item $P$ is \textit{parallel} $\Leftrightarrow$ $\nabla P\equiv 0$,
  \item $F$ is \textit{parallel} $\Leftrightarrow$ $\nabla F\equiv 0$,
  \item $\phi$ is \textit{parallel} $\Leftrightarrow$ $\nabla \phi \equiv 0$,
  \item $\omega$ is \textit{parallel} $\Leftrightarrow$ $\nabla \omega \equiv 0$.
\end{itemize}
In the view of Lemma \ref{lemmagenel} and \eqref{pparallel}$\sim$\eqref{omegaparallel}, we have the following lemma.
\begin{lemma}\label{paralleldefn}
Let $\pi$ be a bi-slant submersion from a Kaehlerian manifold $(M,g,J)$ onto a Riemannian manifold $(N,g_{N})$. Then, for any $W,Z \in ker\pi_{*}$ and $\xi \in ker\pi_{*}^{\perp}$, we get
\begin{eqnarray}
(\nabla_{W}P)Z&=&\phi \mathcal{T}_{W}Z-\mathcal{T}_{W}FZ,\label{pparallel2}\\
(\nabla_{W}F)Z&=&\omega \mathcal{T}_{W}Z-\mathcal{T}_{W}PZ, \label{fparallel2}\\
(\nabla_{W}\phi)\xi &=&P\mathcal{T}_{W}\xi - \mathcal{T}_{W}\omega \xi,\label{fiparallel2}\\
(\nabla_{W}\omega)\xi &=& F \mathcal{T}_{W}\xi - \mathcal{T}_{W}\phi \xi\label{omegaparallel2}.
\end{eqnarray}
\end{lemma}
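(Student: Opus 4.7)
The plan is to derive each of the four identities \eqref{pparallel2}--\eqref{omegaparallel2} by substituting the covariant-derivative definitions \eqref{pparallel}--\eqref{omegaparallel} into the structure equations supplied by Lemma \ref{lemmagenel}, and then cleaning up the horizontal/vertical parts using the basic vector field identity $\mathcal{H}\nabla_V X=\mathcal{A}_X V$ (valid here by Remark \ref{remark1}).

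For \eqref{pparallel2}, I would start from \eqref{pparallel}, i.e.\ $(\nabla_W P)Z=\hat{\nabla}_W PZ-P\hat{\nabla}_W Z$, and rearrange \eqref{eq1} (with $(X,Y)$ replaced by $(W,Z)$) to read $\hat{\nabla}_W PZ-P\hat{\nabla}_W Z=\phi\mathcal{T}_W Z-\mathcal{T}_W FZ$, which is exactly the desired formula; no extra input is needed here. For \eqref{fparallel2}, after plugging \eqref{fparallel} in, I would solve \eqref{eq2} for $F\hat{\nabla}_W Z$ and substitute to get $(\nabla_W F)Z=\mathcal{H}\nabla_W FZ-\mathcal{T}_W PZ-\mathcal{A}_{FZ}W+\omega\mathcal{T}_W Z$; the leftover term $\mathcal{H}\nabla_W FZ-\mathcal{A}_{FZ}W$ then vanishes because $FZ$ is horizontal and hence basic by Remark \ref{remark1}, yielding the claimed expression.

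The two horizontal–input identities \eqref{fiparallel2} and \eqref{omegaparallel2} are handled analogously using \eqref{eq3} and \eqref{eq4}. For \eqref{fiparallel2}, starting from \eqref{fiparallel} and solving \eqref{eq3} for $\hat{\nabla}_W\phi\xi$ leaves a term $\phi\mathcal{A}_\xi W-\phi\mathcal{H}\nabla_W\xi$, which is zero because $\xi$ basic gives $\mathcal{H}\nabla_W\xi=\mathcal{A}_\xi W$. For \eqref{omegaparallel2}, substituting \eqref{omegaparallel} and using \eqref{eq4} produces the residual $\mathcal{A}_{\omega\xi}W-\omega\mathcal{H}\nabla_W\xi$, and applying the basic-field identity to both $\xi$ and $\omega\xi$ collapses this to $\mathcal{A}_{\omega\xi}W-\omega\mathcal{A}_\xi W$, which is precisely what \eqref{eq4} equates to $F\mathcal{T}_W\xi-\mathcal{T}_W\phi\xi$ after one more rearrangement.

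The whole argument is essentially bookkeeping, so there is no real obstacle; the only subtlety worth flagging is the consistent invocation of Remark \ref{remark1} to identify $\mathcal{H}\nabla_W\cdot$ with $\mathcal{A}_\cdot W$ on the horizontal vectors $FZ$, $\xi$, and $\omega\xi$. I would therefore present the proof compactly by writing out \eqref{pparallel2} in full as a model calculation, and then indicating that \eqref{fparallel2}, \eqref{fiparallel2}, \eqref{omegaparallel2} follow by the same rearrangement of \eqref{eq2}, \eqref{eq3}, \eqref{eq4} respectively, combined with the basic vector field identity.
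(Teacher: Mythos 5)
Your proposal is correct and is exactly the route the paper intends: the paper gives no written proof, stating only that the lemma follows ``in the view of Lemma \ref{lemmagenel} and \eqref{pparallel}$\sim$\eqref{omegaparallel},'' and your computation fills in precisely that substitution, including the needed cancellations via the basic-field identity $\mathcal{H}\nabla_{W}\xi=\mathcal{A}_{\xi}W$ from Remark \ref{remark1}. No gaps.
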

\begin{theorem}
Let $\pi$ be a bi-slant submersion from a Kaehlerian manifold $(M,g,J)$ onto a Riemannian manifold $(N,g_{N})$. Then, $F$ is parallel if and only if $\phi$ is parallel.
\end{theorem}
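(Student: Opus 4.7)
My plan is to establish the equivalence by showing that the tensor fields $\nabla F$ and $\nabla \phi$ are, in an appropriate sense, adjoint to one another. Using Lemma~\ref{paralleldefn}, the parallelism of $F$ and $\phi$ translate into the pointwise identities
\[
\omega \mathcal{T}_W Z - \mathcal{T}_W PZ = 0 \quad \text{and} \quad P\mathcal{T}_W \xi - \mathcal{T}_W \omega \xi = 0,
\]
respectively, for all $W,Z \in \ker\pi_*$ and $\xi \in \ker\pi_*^{\perp}$. Both expressions live in complementary distributions ($\ker\pi_*^{\perp}$ and $\ker\pi_*$), so their vanishing can be tested by pairing against arbitrary horizontal, respectively vertical, vectors.

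First I would record the basic adjoint relations that follow from $J$ being an isometry with $J^2 = -I$ together with \eqref{decompvervec}, \eqref{decomphorvec}: namely
\[
g(FV,X) = -g(V,\phi X), \qquad g(\omega X, Y) = -g(X,\omega Y), \qquad g(PU,V) = -g(U,PV),
\]
for $U,V \in \ker\pi_*$ and $X,Y \in \ker\pi_*^{\perp}$. I would also use that the O'Neill tensor $\mathcal{T}_W$ is skew-symmetric and exchanges the vertical and horizontal distributions, so that
\[
g(\mathcal{T}_W A, B) = -g(A, \mathcal{T}_W B)
\]
whenever $A$ is vertical and $B$ horizontal (or vice versa).

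Next, using \eqref{fparallel2}, I would compute $g\bigl((\nabla_W F)Z, \xi\bigr) = g(\omega \mathcal{T}_W Z, \xi) - g(\mathcal{T}_W PZ, \xi)$ for arbitrary $W,Z \in \ker\pi_*$ and $\xi \in \ker\pi_*^{\perp}$, then move $\omega$ across the metric via its skew-symmetry, and shuttle $\mathcal{T}_W$ to the other slot using its skew-symmetry (noting that $\mathcal{T}_W Z$ and $\mathcal{T}_W PZ$ are horizontal while $\mathcal{T}_W\xi$, $\mathcal{T}_W \omega\xi$ are vertical). Finally, I move $P$ across using its skew-symmetry on the vertical distribution. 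The outcome I expect is
\[
g\bigl((\nabla_W F)Z, \xi\bigr) = -g\bigl((\nabla_W \phi)\xi, Z\bigr),
\]
after comparing with \eqref{fiparallel2}. Since this holds for all choices of $W, Z \in \ker\pi_*$ and $\xi \in \ker\pi_*^{\perp}$, the vanishing of either side forces the vanishing of the other, yielding $\nabla F \equiv 0 \Leftrightarrow \nabla \phi \equiv 0$.

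The main obstacle I anticipate is purely bookkeeping: ensuring that the three adjoint transfers (of $\omega$, of $\mathcal{T}$, and of $P$) combine with consistent signs to produce exactly $-(\nabla_W \phi)\xi$, and checking that at each step the vector being ``moved'' indeed lies in the distribution on which the relevant skew-symmetry applies. No deeper structural input than the identities already collected in the previous lemmas should be required.
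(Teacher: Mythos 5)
Your proposal is correct and follows essentially the same route as the paper: the paper's proof consists of exactly the adjoint transfers you describe (moving $J$ across the metric via $g(JX,Y)=-g(X,JY)$ and using the skew-symmetry of $\mathcal{T}_{W}$) to establish $g(P\mathcal{T}_{W}\xi,Z)=g(\mathcal{T}_{W}PZ,\xi)$ and $g(\omega\mathcal{T}_{W}Z,\xi)=g(\mathcal{T}_{W}\omega\xi,Z)$. Packaging these as the single duality $g\bigl((\nabla_{W}F)Z,\xi\bigr)=-g\bigl((\nabla_{W}\phi)\xi,Z\bigr)$ is a marginally cleaner presentation that yields both implications at once, whereas the paper writes out only the direction $\nabla F\equiv 0\Rightarrow\nabla\phi\equiv 0$.
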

\begin{proof}
  Let $F$ be parallel. Then, for any $W,Z \in ker\pi_{*}$, from \eqref{fiparallel2} we have $\omega \mathcal{T}_{W}Z=\mathcal{T}_{W}PZ$. By using \eqref{e9}, \eqref{decompvervec} and fundamental properties of O'Neill tensor $\mathcal{T}$, we get
  \begin{eqnarray*}
g(P\mathcal{T}_{W}\xi,Z)&=&g(J\mathcal{T}_{W}\xi,Z)=-g(\mathcal{T}_{W}\xi,JZ)\\
&=&-g(\mathcal{T}_{W}\xi,PZ)=g(\mathcal{T}_{W}PZ, \xi).
  \end{eqnarray*}
In the view of the fact of parallelism of $F$, we obtain
\begin{eqnarray*}
g(P\mathcal{T}_{W}\xi,Z)&=&g(\mathcal{T}_{W}PZ, \xi)=g(\omega \mathcal{T}_{W}Z, \xi)\\
&=&g(J\mathcal{T}_{W}Z, \xi)=-g(\mathcal{T}_{W}Z, \omega \xi)=g(\mathcal{T}_{W} \omega \xi,Z).
\end{eqnarray*}
So, we have for any $Z \in ker\pi_{*}$ $g(P\mathcal{T}_{W}\xi,Z)=g(\mathcal{T}_{W} \omega \xi,Z)$ i.e. $\phi$ is parallel.
\end{proof}
It is said that the fiber is \textit{$\mathcal{D}^{\theta_{1}}\!-\!\mathcal{D}^{\theta_{2}}$-mixed geodesic}, for any two distributions $\mathcal{D}^{\theta_{1}}$ and $\mathcal{D}^{\theta_{2}}$ defined on the fiber of a Riemannian submersion, if for any $X \in \mathcal{D}^{\theta_{1}}$ and $U \in \mathcal{D}^{\theta_{2}}$, $\mathcal{T}_{X}U=0$.
\begin{theorem}
Let $\pi$ be a bi-slant submersion from a Kaehlerian manifold $(M,g,J)$ onto a Riemannian manifold $(N,g_{N})$ with parallel canonical structure $F$. Then, the fibers are $\mathcal{D}^{\theta_{1}}\!-\!\mathcal{D}^{\theta_{2}}$-mixed geodesic.
\end{theorem}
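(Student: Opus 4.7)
The plan is to turn the parallelism of $F$ into an algebraic identity for $\mathcal{T}$ on pairs from the two slant distributions, and then iterate that identity using the $P$-action together with the symmetry of $\mathcal{T}$ on vertical vectors to force $\mathcal{T}_X U = 0$.

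First I would invoke \eqref{fparallel2} from Lemma \ref{paralleldefn}: since $\nabla F \equiv 0$, one immediately has
\begin{equation*}
\omega \mathcal{T}_W Z \;=\; \mathcal{T}_W P Z \qquad \text{for every } W,Z \in \ker\pi_{*}.
\end{equation*}
I would then specialise this identity in two ways. Taking $W = X \in \mathcal{D}^{\theta_{1}}$ and $Z = U \in \mathcal{D}^{\theta_{2}}$ yields $\omega \mathcal{T}_X U = \mathcal{T}_X P U$. Taking $W = U$ and $Z = X$ yields $\omega \mathcal{T}_U X = \mathcal{T}_U P X$; the symmetry \eqref{testequation111} of $\mathcal{T}$ on vertical vectors turns the left side into $\omega \mathcal{T}_X U$ and (since $PX$ is vertical) the right side into $\mathcal{T}_{PX} U$. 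Comparing the two expressions for $\omega \mathcal{T}_X U$ gives the key relation
\begin{equation*}
\mathcal{T}_X P U \;=\; \mathcal{T}_{P X} U.
\end{equation*}

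The second step is to iterate this relation, using that $P\mathcal{D}^{\theta_{i}}\subset \mathcal{D}^{\theta_{i}}$ so each substitution stays inside the correct slant distribution. Replacing $U$ by $PU$ and invoking Lemma \ref{general}(b) gives $\mathcal{T}_{PX} P U = \mathcal{T}_X P^{2}U = -\cos^{2}\theta_{2}\,\mathcal{T}_X U$. Replacing $X$ by $PX$ in the same relation and invoking Lemma \ref{general}(a) gives $\mathcal{T}_{PX} P U = \mathcal{T}_{P^{2}X} U = -\cos^{2}\theta_{1}\,\mathcal{T}_X U$. Equating the two expressions produces
\begin{equation*}
(\cos^{2}\theta_{1} - \cos^{2}\theta_{2})\,\mathcal{T}_X U \;=\; 0.
\end{equation*}

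Since $\theta_{1},\theta_{2}\in(0,\pi/2)$ are the two distinct slant angles of a proper bi-slant submersion, the scalar coefficient is nonzero, so $\mathcal{T}_X U = 0$ for all $X\in \mathcal{D}^{\theta_{1}}$, $U \in \mathcal{D}^{\theta_{2}}$, which is precisely the $\mathcal{D}^{\theta_{1}}$-$\mathcal{D}^{\theta_{2}}$-mixed geodesicness of the fibers. The main obstacle here is the final scalar step: the argument really does need $\cos^{2}\theta_{1}\neq \cos^{2}\theta_{2}$, and one should note explicitly that this is automatic from the standing assumption that the two slant distributions have different angles in $(0,\pi/2)$; without such a convention the conclusion would have to be weakened to $\mathcal{T}_X U \in \ker(\cos^{2}\theta_{1}-\cos^{2}\theta_{2})$.
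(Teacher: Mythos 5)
Your proposal is correct and follows essentially the same route as the paper's proof: both combine the identity $\omega\mathcal{T}_{W}Z=\mathcal{T}_{W}PZ$ coming from $\nabla F\equiv 0$ with the symmetry $\mathcal{T}_{V}W=\mathcal{T}_{W}V$ and Lemma \ref{general}(a),(b) to reach $\cos^{2}\theta_{1}\,\mathcal{T}_{X}U=\cos^{2}\theta_{2}\,\mathcal{T}_{X}U$ (the paper phrases this as computing $\omega^{2}\mathcal{T}_{X}U$ in two ways rather than via your intermediate relation $\mathcal{T}_{X}PU=\mathcal{T}_{PX}U$, but the algebra is identical). Your explicit observation that the final step requires $\theta_{1}\neq\theta_{2}$ is a point the paper passes over silently, and is worth keeping.
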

\begin{proof}
  Let $X$ be in $\mathcal{D}^{\theta_{1}}$ and $U$ in $\mathcal{D}^{\theta_{2}}$. Then, from Lemma \ref{general}-(b) and \eqref{fparallel2}, we obtain
  \begin{equation*}
    \omega^{2}\mathcal{T}_{X}U=\omega(\omega \mathcal{T}_{X}U)=\omega \mathcal{T}_{X}PU=\mathcal{T}_{X}P^{2}U=-\cos^{2}\theta_{2}\mathcal{T}_{X}U.
  \end{equation*}
  On the other hand, from Lemma \ref{general}-(a) and \eqref{fparallel2}, we get
  \begin{equation*}
  \omega^{2}\mathcal{T}_{X}U=\omega^{2}\mathcal{T}_{U}X=\omega(\mathcal{T}_{U}PX)=\mathcal{T}_{U}P^{2}X=-\cos^{2}\theta_{1}\mathcal{T}_{U}X.
  \end{equation*}
  Therefore, we obtain
  \begin{equation*}
    -\cos^{2}\theta_{2}\mathcal{T}_{X}U=-\cos^{2}\theta_{1}\mathcal{T}_{X}U.
  \end{equation*}
Since $\cos^{2}\theta_{2}\mathcal{T}_{X}U=\cos^{2}\theta_{1}\mathcal{T}_{X}U$, we have $\mathcal{T}_{X}U=$. That implies the fibers are $\mathcal{D}^{\theta_{1}}\!-\!\mathcal{D}^{\theta_{2}}$-mixed geodesic.
\end{proof}
\section{Curvature Relations}
In this section, the sectional curvatures of the total space, base space and the fibers of a bi-slant submersion are investigated.\\
 Let $\pi$ be a bi-slant submersion from a Kaehlerian manifold $(M,g,J)$ onto a Riemannian manifold $(N,g_{N})$. We denote the Riemannian curvature tensors of $M$, $N$ and any fiber of the submersion with $R$, $R^{*}$ and $\hat{R}$, respectively. Also, we denote the sectional curvatures of $M$, $N$ and any fiber of the submersion with $K$, $K^{*}$ and $\hat{K}$, respectively. It is known that the sectional curvature for a Riemannian submersion is defined, for any pair of non-zero orthogonal vectors $U$ and $V$ \cite{O}
\begin{equation}\label{sectionalcurvature}
K(U,V)= \frac{R(U,V,V,U)}{g(U,U)g(V,V)}.
\end{equation}
For any $e_{1},e_{2}\in ker\pi_{*}$ and $E_{1}, E_{2} \in ker\pi^{\perp}_{*} $ the Riemannian curvature tensor $R$ is given by \cite{O}
\begin{eqnarray}
R(e_{1},e_{2},e_{3},e_{4})&=&\hat{R}(e_{1},e_{2},e_{3},e_{4})-g(\mathcal{T}_{e_{1}}e_{4},\mathcal{T}_{e_{2}}e_{3})\nonumber \\
& &+g(\mathcal{T}_{e_{2}}e_{4},\mathcal{T}_{e_{1}}e_{3}), \label{r1}
\end{eqnarray}
\begin{eqnarray}
R(e_{1},e_{2},e_{3},E_{1})&=&g((\nabla_{e_{1}}\mathcal{T})(e_{2},e_{3}),E_{1})-g((\nabla_{e_{2}}\mathcal{T})(e_{1},e_{3}),E_{1}),\label{r2}
\end{eqnarray}
\begin{eqnarray}
R(E_{1},E_{2},E_{3},e_{1})&=&-g((\nabla_{E_{3}}\mathcal{A})(E_{1},E_{2}),e_{1})-g(\mathcal{A}_{E_{1}}E_{2},\mathcal{T}_{e_{1}}E_{3})\nonumber \\
& &g(\mathcal{A}_{E_{2}}E_{3},\mathcal{T}_{e_{1}}E_{1})+g(\mathcal{A}_{E_{3}}E_{1},\mathcal{T}_{e_{1}}E_{2}),\label{r3}
\end{eqnarray}
\begin{eqnarray}
R(E_{1},E_{2},E_{3},E_{4})&=&R^{*}(E_{1},E_{2},E_{3},E_{4})+2g(\mathcal{A}_{E_{1}}E_{2},\mathcal{A}_{E_{3}}E_{4})\nonumber \\
& &-g(\mathcal{A}_{E_{2}}E_{3},\mathcal{A}_{E_{1}}E_{4})+g(\mathcal{A}_{E_{1}}E_{3},\mathcal{A}_{E_{2}}E_{4}),\label{r4}
\end{eqnarray}
\begin{eqnarray}
R(E_{1},E_{2},e_{1},e_{2})&=&-g((\nabla_{e_{1}}\mathcal{A})(E_{1},E_{2}),e_{2})+g((\nabla_{e_{2}}\mathcal{A})(E_{1},E_{2}),e_{1})\nonumber \\
& &-g(\mathcal{A}_{E_{1}}e_{1},\mathcal{A}_{E_{2}}e_{2})+g(\mathcal{A}_{E_{1}}e_{2},\mathcal{A}_{E_{2}}e_{1})\nonumber \\
& &+g(\mathcal{T}_{e_{1}}E_{1},\mathcal{T}_{e_{2}}E_{2})-g(\mathcal{T}_{e_{2}}E_{1},\mathcal{T}_{e_{1}}E_{2}),\label{r5}
\end{eqnarray}
\begin{eqnarray}
R(E_{1},e_{1},E_{2},e_{2})&=&-g((\nabla_{E_{1}}\mathcal{T})(e_{1},e_{2}),E_{2})-g((\nabla_{e_{1}}\mathcal{A})(E_{1},E_{2}),e_{2})\nonumber \\
& & g(\mathcal{T}_{e_{1}}E_{1},\mathcal{T}_{e_{2}}E_{2})-g(\mathcal{A}_{E_{1}}e_{1},\mathcal{A}_{E_{2}}e_{2}),\label{r6}
\end{eqnarray}
where $R$, $R^{*}$ and $\hat{R}$ is Riemannian curvature of $M$, $N$ and fiber, respectively.\\
Furthermore, let $\pi$ be submersion from a Riemannian manifold $M$ onto a Riemannian manifold $N$. Then, the followings are given \cite{O}:
\begin{eqnarray}
K(e_{1},e_{2})=\hat{K}(e_{1},e_{2})-g(\mathcal{T}_{e_{1}}e_{1},\mathcal{T}_{e_{2}}e_{2})+\|\mathcal{T}_{e_{1}}e_{2}\|^{2},\label{curv1}
\end{eqnarray}
\begin{eqnarray}
K(E_{1},e_{1})=g((\nabla_{E_{1}}\mathcal{T})(e_{1},e_{1}),E_{1})+\|\mathcal{A}_{E_{1}}e_{1}\|^{2}-\|\mathcal{T}_{e_{1}}E_{1}\|^{2},\label{curv2}
\end{eqnarray}
\begin{eqnarray}
K(E_{1},E_{2})=K^{*}(E_{1},E_{2})-3\|\mathcal{A}_{E_{1}}E_{2}\|^{2},\label{curv3}
\end{eqnarray}
where $e_{1},e_{2}\in ker\pi_{*}$ and $E_{1}, E_{2} \in ker\pi^{\perp}_{*} $ orthonormal vector fields.
 \begin{theorem}
 Let $\pi$ be a bi-slant submersion from a Kaehlerian manifold $(M,g,J)$ onto a Riemannian manifold $(N,g_{N})$. Then, we obtain
 \begin{eqnarray}\label{sect1}
K(e_{1},e_{2})&=&\hat{K}(Pe_{1},Pe_{2})\|Pe_{1}\|^{-2}\|Pe_{2}\|^{-2}+K^{*}(F{e_1},Fe_{2})\|Fe_{1}\|^{-2}\|Fe_{2}\|^{-2}\nonumber\\
& &-g(\mathcal{T}_{Pe_{1}}Pe_{1},\mathcal{T}_{Pe_{2}}Pe_{2})+\|\mathcal{A}_{Fe_{1}}Pe_{2}\|^{2}\nonumber\\
& &+g((\nabla_{Fe_{2}}\mathcal{T})(Pe_{1},Pe_{2}),Fe_{2})-\|\mathcal{T}_{Pe_{1}}Fe_{2}\|^{2}\nonumber\\
& &-3\|\mathcal{A}_{Fe_{1}}Fe_{2}\|^{2}+\|\mathcal{T}_{Pe_{2}}Pe_{1}\|^{2},
 \end{eqnarray}
 \begin{eqnarray}\label{sect2}
K(e_{1},E_{1})&=&\hat{K}(Pe_{1},\phi E_{1})\|Pe_{1}\|^{-2}\|\phi E_{1}\|^{-2}+K^{*}(Fe_{1},\omega E_{1})\|Fe_{1}\|^{-2}\|\omega E_{1}\|^{-2}\nonumber\\
& &-\|\mathcal{T}_{\phi E_{1}}Pe_{1}\|^{2}-\|\mathcal{T}_{P e_{1}}\omega E_{1}\|^{2}-3\|\mathcal{A}_{Fe_{1}}\omega E_{1}\|^{2} \nonumber \\
& &+ \|\mathcal{A}_{\omega E_{1}}Pe_{1}\|^{2}-\|\mathcal{T}_{\phi E_{1}}Fe_{1}\|^{2}-g(\mathcal{T}_{Pe_{1}}Pe_{1},\mathcal{T}_{\phi E_{1}}\phi E_{1}) \nonumber \\
& & +\|\mathcal{A}_{Fe_{1}}\phi E_{1}\|^{2}+g((\nabla_{\omega E_{1}}\mathcal{T})(Pe_{1},Pe_{1}),\omega E_{1}) \nonumber \\
& & +g((\nabla_{Fe_{1}}\mathcal{T})(\phi E_{1},\phi E_{1}),Fe_{1}),
\end{eqnarray}
\begin{eqnarray}\label{sect3}
K(E_{1},E_{2})&=&\hat{K}(\phi E_{1},\phi E_{2})\|\phi E_{1}\|^{-2}\|\phi E_{2}\|^{-2}+K^{*}(\omega E_{1},\omega E_{2})\|\omega E_{1}\|^{-2}\|\omega E_{2}\|^{-2}\nonumber \\
& &+\|\mathcal{T}_{\phi E_{2}}\phi E_{1}\|^{2}-g(\mathcal{T}_{\phi E_{1}}\phi E_{1},\mathcal{T}_{\phi E_{2}}\phi E_{2}) \nonumber \\
& & +g((\nabla_{\omega E_{2}}\mathcal{T})(\phi E_{1},\phi E_{2}),\omega E_{2})-\|\mathcal{T}_{\phi E_{1}}\omega E_{2}\|^{2}\nonumber \\
& & +\|\mathcal{A}_{\omega E_{2}}\phi E_{1}\|^{2}+g((\nabla_{\omega E_{1}}\mathcal{T})(\phi E_{2},\phi E_{2}),\omega E_{1})\nonumber \\
& & -\|\mathcal{T}_{\phi E_{2}}\omega E_{1}\|^{2}+\|\mathcal{A}_{\omega E_{1}}\phi E_{2}\|^{2}-3\|\mathcal{A}_{\omega E_{1}}\omega E_{2}\|^{2}.
\end{eqnarray}
\end{theorem}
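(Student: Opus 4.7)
The plan is to exploit the Kaehler identity $R(JX,JY,JZ,JW) = R(X,Y,Z,W)$, which follows at once from $\nabla J \equiv 0$, together with the pointwise splittings $Je_i = Pe_i + Fe_i$ for vertical $e_i$ and $JE_j = \phi E_j + \omega E_j$ for horizontal $E_j$. Applying $J$ in all four slots of the Riemann tensor lets one replace each entry by its vertical-plus-horizontal decomposition, and multilinear expansion then reduces every resulting piece to one that the O'Neill identities \eqref{r1}--\eqref{r6}, together with the sectional-curvature specialisations \eqref{curv1}--\eqref{curv3}, handle directly.

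For \eqref{sect1}, I would begin from
\[
R(e_1,e_2,e_2,e_1) \;=\; R(Je_1,Je_2,Je_2,Je_1) \;=\; R\bigl(Pe_1+Fe_1,\,Pe_2+Fe_2,\,Pe_2+Fe_2,\,Pe_1+Fe_1\bigr)
\]
and expand bilinearly in each pair of slots. The resulting terms fall into three classes according to the vertical/horizontal type of their entries: the purely vertical term $R(Pe_1,Pe_2,Pe_2,Pe_1)$ is converted to $\hat{K}(Pe_1,Pe_2)$ together with its $\mathcal{T}$-corrections by \eqref{r1}/\eqref{curv1}, the purely horizontal term to $K^*(Fe_1,Fe_2)$ plus $\mathcal{A}$-corrections by \eqref{r4}/\eqref{curv3}, and each mixed term to one of the $\mathcal{T}$- or $\mathcal{A}$-expressions appearing on the right of \eqref{sect1} via \eqref{r2}, \eqref{r3}, \eqref{r5}, \eqref{r6} and \eqref{curv2}. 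The proofs of \eqref{sect2} and \eqref{sect3} follow exactly the same scheme: in \eqref{sect2} one replaces $e_1 \to Pe_1 + Fe_1$ and $E_1 \to \phi E_1 + \omega E_1$ and expands, while in \eqref{sect3} one replaces both $E_j \to \phi E_j + \omega E_j$, matching the surviving terms against the O'Neill identities slot by slot.

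I expect the main obstacle to be organisational rather than conceptual: each expansion produces on the order of sixteen multilinear curvature terms which must be classified according to the vertical/horizontal type of each of their four entries, paired with the correct O'Neill identity, and then combined using the symmetries $R(X,Y,Z,W) = R(Z,W,X,Y) = -R(Y,X,Z,W)$ to reproduce the compact form stated. Along the way, the algebraic identities of Lemma \ref{general} and Lemma \ref{lemmagenel} are needed to rewrite various $\phi$- and $\omega$-images as the $\mathcal{T}$-$\mathcal{A}$ combinations that appear on the right-hand sides, and to discard the terms that cancel by these symmetries. No new geometric input should be required beyond the Kaehler relation, the O'Neill framework of Section~2 and the structural lemmas already proved earlier in Section~3, so the argument is in principle mechanical, with its entire length coming from careful bookkeeping of the expansion.
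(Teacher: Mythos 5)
Your strategy coincides with the paper's: both start from the Kaehler invariance $K(e_{1},e_{2})=K(Je_{1},Je_{2})$, substitute $Je_{i}=Pe_{i}+Fe_{i}$ (resp. $JE_{j}=\phi E_{j}+\omega E_{j}$), and feed the resulting curvature terms into O'Neill's equations \eqref{r1}--\eqref{r6} and \eqref{curv1}--\eqref{curv3}. The difference lies in how the expansion is handled, and that is exactly where the difficulty sits. You propose the honest full multilinear expansion of $R(Pe_{1}+Fe_{1},Pe_{2}+Fe_{2},Pe_{2}+Fe_{2},Pe_{1}+Fe_{1})$ into sixteen terms and assert that the twelve off-diagonal ones are absorbed ``using the symmetries'' into the stated right-hand side. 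That assertion is the gap: the symmetry $R(X,Y,Z,W)=R(W,Z,Y,X)$ only matches the off-diagonal terms up in equal pairs, it does not kill them. For instance $R(Pe_{1},Pe_{2},Pe_{2},Fe_{1})$ survives (doubled, together with its partner $R(Fe_{1},Pe_{2},Pe_{2},Pe_{1})$) and by \eqref{r2} equals $g((\nabla_{Pe_{1}}\mathcal{T})(Pe_{2},Pe_{2}),Fe_{1})-g((\nabla_{Pe_{2}}\mathcal{T})(Pe_{1},Pe_{2}),Fe_{1})$, which is generically nonzero and appears nowhere in \eqref{sect1}. Carried out faithfully, your expansion therefore terminates in a formula with more terms than the theorem states, unless you supply an argument for why each cross term vanishes --- and no such argument is available from Lemma \ref{general} or Lemma \ref{lemmagenel} alone.

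For comparison, the paper sidesteps this by writing $K(Je_{1},Je_{2})=K(Pe_{1},Pe_{2})+K(Pe_{1},Fe_{2})+K(Fe_{1},Pe_{2})+K(Fe_{1},Fe_{2})$, i.e.\ by retaining only the four ``diagonal'' curvature terms from the outset; since sectional curvature is not additive in its arguments, this is the same leap made earlier and more quietly. To reproduce \eqref{sect1} you must either restrict to the four diagonal terms as the paper does (and acknowledge that the cross terms are being discarded) or prove that they cancel, which in general they do not. A secondary point: watch the normalisation. Since $Pe_{i}$ and $Fe_{i}$ are not unit vectors, converting $\hat{R}(Pe_{1},Pe_{2},Pe_{2},Pe_{1})$ into $\hat{K}(Pe_{1},Pe_{2})$ introduces the factor $\|Pe_{1}\|^{2}\|Pe_{2}\|^{2}$, not $\|Pe_{1}\|^{-2}\|Pe_{2}\|^{-2}$, so your bookkeeping should not inherit the sign of the exponent used in the paper's displayed conversion.
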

\begin{proof}
Let $e_{1},e_{2}\in ker\pi_{*}$ and $E_{1}, E_{2} \in ker\pi^{\perp}_{*} $ be orthonormal vector fields. Then, by the fact that $K(e_{1},e_{2})=K(Je_{1},Je_{2})$, \eqref{decompvervec} and \eqref{decomphorvec}, we get
\begin{eqnarray*}
K(e_{1},e_{2})=K(Je_{1},Je_{2})&=&K(Pe_{1},Pe_{2})+K(Pe_{1},Fe_{2})\\
& &+K(Fe_{1},Pe_{2})+K(Fe_{1},Fe_{2}).
\end{eqnarray*}
By the definition of the sectional curvature, we obtain
\begin{eqnarray*}
\Rightarrow K(e_{1},e_{2})&=& R(Pe_{1},Pe_{2},Pe_{2},Pe_{1})+R(Pe_{1},Fe_{2},Fe_{2},Pe_{1})\\
& &R(Fe_{1},Pe_{2},Pe_{2},Fe_{1})+R(Fe_{1},Fe_{2},Fe_{2},Fe_{1}).
\end{eqnarray*}
Thus, with the help of \eqref{r1}$\sim$\eqref{r6}, we have
\begin{eqnarray*}
\Rightarrow K(e_{1},e_{2})&=&\hat{R}(Pe_{1},Pe_{2},Pe_{2},Pe_{1})-g(\mathcal{T}_{Pe_{1}}Pe_{1},\mathcal{T}_{Pe_{2}}Pe_{2})+\|\mathcal{T}_{Pe_{1}}Pe_{2}\|^{2}\\
& & +g((\nabla_{Fe_{2}}\mathcal{T})(Pe_{1},Pe_{1}),Fe_{2})-\|\mathcal{T}_{Pe_{1}}Fe_{2}\|^{2}+\|\mathcal{A}_{Fe_{2}}Pe_{1}\|^{2}\\
& &+g((\nabla_{Fe_{1}}\mathcal{T})(Pe_{2},Pe_{2}),Fe_{1})-\|\mathcal{T}_{Pe_{2}}Fe_{1}\|^{2}+\|\mathcal{A}_{Fe_{1}}Pe_{2}\|^{2}\\
& & +R^{*}(Pe_{1},Pe_{2},Pe_{2},Pe_{1})-3\|\mathcal{A}_{Fe_{1}}Fe_{2}\|^{2}.
\end{eqnarray*}
Since,
\begin{equation*}
\hat{R}(Pe_{1},Pe_{2},Pe_{2},Pe_{1})=\hat{K}(Pe_{1},Pe_{2})\|Pe_{1}\|^{-2}\|Pe_{2}\|^{-2}
\end{equation*}
\[and\]
\begin{equation*}
R^{*}(Pe_{1},Pe_{2},Pe_{2},Pe_{1})=K^{*}(F{e_1},Fe_{2})\|Fe_{1}\|^{-2}\|Fe_{2}\|^{-2}
\end{equation*}
\eqref{sect1} is obtained. \eqref{sect2} and \eqref{sect3} can be obtained with a similar way.
\end{proof}
Now, we give some inequalities for sectional curvatures of total manifold, base manifold and fibers.
\begin{corollary}
 Let $\pi$ be a bi-slant submersion from a Kaehlerian manifold $(M,g,J)$ onto a Riemannian manifold $(N,g_{N})$. Then, we have\\
\[
  \begin{array}{ccc}
    \hat{K}(Pe_{1},Pe_{2})\|Pe_{1}\|^{-2}\|Pe_{2}\|^{-2} &   &g(\mathcal{T}_{Pe_{1}}Pe_{1},\mathcal{T}_{Pe_{2}}Pe_{2}) \\
   +K^{*}(F{e_1},Fe_{2})\|Fe_{1}\|^{-2}\|Fe_{2}\|^{-2}& \leq &  +\|\mathcal{T}_{Pe_{1}}Fe_{2}\|^{2}, \\
    -\hat{K}(e_{1},e_{2}) &   &   \\
 \end{array}
\]
\end{corollary}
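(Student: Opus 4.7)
The plan is to derive the inequality from the exact curvature identity \eqref{sect1} established above, together with the O'Neill formula \eqref{curv1}.

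First, I would take \eqref{sect1} and rearrange it so that the quantity $\hat K(Pe_1,Pe_2)\|Pe_1\|^{-2}\|Pe_2\|^{-2}+K^*(Fe_1,Fe_2)\|Fe_1\|^{-2}\|Fe_2\|^{-2}$ appears alone on one side. Then, using \eqref{curv1} applied to the orthonormal vertical pair $e_1,e_2$, I would substitute $K(e_1,e_2)=\hat K(e_1,e_2)-g(\mathcal T_{e_1}e_1,\mathcal T_{e_2}e_2)+\|\mathcal T_{e_1}e_2\|^2$ into the rearrangement. Moving $\hat K(e_1,e_2)$ to the other side produces an \emph{exact} expression for the left-hand side of the claimed inequality, namely $g(\mathcal T_{Pe_1}Pe_1,\mathcal T_{Pe_2}Pe_2)+\|\mathcal T_{Pe_1}Fe_2\|^2$ plus a residue built from squared norms and one covariant-derivative coupling.

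The second step is to show that this residue is non-positive, which upgrades the equation into the asserted inequality. Several of its contributions---in particular the negative squared norms $-\|\mathcal A_{Fe_1}Pe_2\|^2$, $-\|\mathcal T_{Pe_2}Pe_1\|^2$ and $-3\|\mathcal A_{Fe_1}Fe_2\|^2$---are already non-positive by inspection, while the remaining $\mathcal T$-cross terms produced by \eqref{curv1} either cancel pairwise via the symmetry $\mathcal T_V W=\mathcal T_W V$ from \eqref{testequation111}, or can be bounded by zero using Lemma \ref{general}(a)-(b) once one expresses $\mathcal T_{e_i}(\cdot)$ through the $P$-$F$ decomposition.

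The principal obstacle lies in the derivative term $g((\nabla_{Fe_2}\mathcal T)(Pe_1,Pe_2),Fe_2)$, which a priori has no fixed sign. To handle it, I would invoke the Kaehler condition \eqref{e10} through the identities of Lemma \ref{lemmagenel}, rewriting this coupling so that it either gets absorbed into the non-positive $\mathcal A$-norms already present or cancels against the remaining O'Neill-type contributions. Once this step is carried out, the residue is confirmed to lie in $(-\infty,0]$ and the corollary follows immediately by dropping these non-positive contributions from the exact identity.
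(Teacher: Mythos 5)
Your first step is exactly the paper's: the published proof simply equates the right-hand side of \eqref{curv1} with the right-hand side of \eqref{sect1} (both being $K(e_1,e_2)$) and then declares the assertion obtained. So the route is the same; the problem is that neither the paper nor your proposal actually carries out the only step that has any content, namely showing that the leftover terms have the right sign, and your sketch of that step contains concrete errors. Write the combined identity as
\[
L-\hat{K}(e_{1},e_{2})-g(\mathcal{T}_{Pe_{1}}Pe_{1},\mathcal{T}_{Pe_{2}}Pe_{2})-\|\mathcal{T}_{Pe_{1}}Fe_{2}\|^{2}=\mathcal{R},
\]
where $L$ denotes the two curvature terms on the left of the corollary. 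Then
\[
\mathcal{R}=-g(\mathcal{T}_{e_{1}}e_{1},\mathcal{T}_{e_{2}}e_{2})+\|\mathcal{T}_{e_{1}}e_{2}\|^{2}-\|\mathcal{A}_{Fe_{1}}Pe_{2}\|^{2}-g((\nabla_{Fe_{2}}\mathcal{T})(Pe_{1},Pe_{2}),Fe_{2})+3\|\mathcal{A}_{Fe_{1}}Fe_{2}\|^{2}-\|\mathcal{T}_{Pe_{2}}Pe_{1}\|^{2},
\]
and you must prove $\mathcal{R}\leq 0$. Note that after this rearrangement the term $3\|\mathcal{A}_{Fe_{1}}Fe_{2}\|^{2}$ enters with a \emph{plus} sign, so it is non-negative and works against you; listing $-3\|\mathcal{A}_{Fe_{1}}Fe_{2}\|^{2}$ among the ``already non-positive'' contributions is a sign error. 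Likewise $\|\mathcal{T}_{e_{1}}e_{2}\|^{2}\geq 0$ cannot be discarded, the cross term $-g(\mathcal{T}_{e_{1}}e_{1},\mathcal{T}_{e_{2}}e_{2})$ has no definite sign and does not cancel via $\mathcal{T}_{V}W=\mathcal{T}_{W}V$, and the covariant-derivative coupling is, as you yourself note, sign-indefinite.

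The proposed remedy for that last term --- ``invoke the Kaehler condition through Lemma \ref{lemmagenel} so that it gets absorbed or cancels'' --- is a statement of intent, not an argument: no identity from Lemma \ref{lemmagenel} is actually applied, and it is not clear that any such absorption exists, since Lemma \ref{lemmagenel} relates $J$ to $\mathcal{T}$ and $\mathcal{A}$ but says nothing about the sign of $g((\nabla_{Fe_{2}}\mathcal{T})(Pe_{1},Pe_{2}),Fe_{2})$. Until $\mathcal{R}\leq 0$ is established term by term (or the indefinite terms are shown to cancel exactly), the inequality is not proved. To be fair, the paper's own proof stops at the same identity and asserts the conclusion without addressing any of this, so you have reproduced the published argument faithfully --- but you have not closed the gap it leaves, and your accounting of which terms are harmless is partly incorrect.
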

\begin{proof}
  Let $e_{1},e_{2}\in ker\pi_{*}$ be orthonormal vector fields. Then, by \eqref{curv1} and \eqref{sect1}, we get
  \begin{eqnarray*}
\hat{K}(e_{1},e_{2})-g(\mathcal{T}_{e_{1}}e_{1},\mathcal{T}_{e_{2}}e_{2})+\|\mathcal{T}_{e_{1}}e_{2}\|^{2}&=&\hat{K}(Pe_{1},Pe_{2})\|Pe_{1}\|^{-2}\|Pe_{2}\|^{-2}\nonumber \\
& &+K^{*}(F{e_1},Fe_{2})\|Fe_{1}\|^{-2}\|Fe_{2}\|^{-2} \nonumber \\
& & -g(\mathcal{T}_{Pe_{1}}Pe_{1},\mathcal{T}_{Pe_{2}}Pe_{2})\nonumber \\
& &+\|\mathcal{A}_{Fe_{1}}Pe_{2}\|^{2} -\|\mathcal{T}_{Pe_{1}}Fe_{2}\|^{2}\nonumber \\
& &+g((\nabla_{Fe_{2}}\mathcal{T})(Pe_{1},Pe_{2}),Fe_{2})\nonumber \\
& & -3\|\mathcal{A}_{Fe_{1}}Fe_{2}\|^{2}+\|\mathcal{T}_{Pe_{2}}Pe_{1}\|^{2}.
  \end{eqnarray*}
Thus, we obtain the assertion.
\end{proof}
\begin{corollary}
 Let $\pi$ be a bi-slant submersion from a Kaehlerian manifold $(M,g,J)$ onto a Riemannian manifold $(N,g_{N})$. Then,
\[
   \begin{array}{ccc}
     \hat{K}(Pe_{1},\phi E_{1}) &   & g((\nabla_{E_{1}}\mathcal{T})(e_{1},e_{1}),E_{1})+\|\mathcal{A}_{E_{1}}e_{1}\|^{2}\\
     +K^{*}(Fe_{1},\omega E_{1}) & \leq &+\|\mathcal{T}_{P e_{1}}\omega E_{1}\|^{2}+\|\mathcal{T}_{\phi E_{1}}Pe_{1}\|^{2}  \\
       &   & +3\|\mathcal{A}_{Fe_{1}}\omega E_{1}\|^{2}+g(\mathcal{T}_{Pe_{1}}Pe_{1},\mathcal{T}_{\phi E_{1}}\phi E_{1}), \\
   \end{array}
 \]
 where $e_{1}\in ker\pi_{*}$ and $E_{1} \in ker\pi^{\perp}_{*} $ orthonormal vector fields.
\end{corollary}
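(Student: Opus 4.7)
The plan is to imitate the proof of the preceding corollary. Equations \eqref{sect2} and \eqref{curv2} both give expressions for the mixed sectional curvature $K(e_1, E_1)$, so equating their right-hand sides yields an identity whose left side is the sum of weighted fibre and base sectional curvatures $\hat{K}(Pe_1,\phi E_1)$ and $K^{*}(Fe_1,\omega E_1)$, together with the normalizing factors inherited from \eqref{sect2}. Discarding the manifestly non-positive tensorial contributions on the resulting right-hand side then produces the claimed inequality.

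Concretely, for orthonormal $e_1\in\ker\pi_{*}$ and $E_1\in\ker\pi_{*}^{\perp}$, I would first substitute the expression for $K(e_1,E_1)$ given by \eqref{curv2} into \eqref{sect2}. This turns \eqref{sect2} into an identity whose right-hand side contains $g((\nabla_{E_1}\mathcal{T})(e_1,e_1),E_1)+\|\mathcal{A}_{E_1}e_1\|^{2}-\|\mathcal{T}_{e_1}E_1\|^{2}$ together with all the O'Neill-tensor terms appearing in \eqref{sect2}. Collecting the two sectional-curvature terms on the left and transferring every other contribution to the right (with the appropriate sign flips) gives an equality of the schematic form
\begin{equation*}
\hat{K}(Pe_1,\phi E_1)+K^{*}(Fe_1,\omega E_1) \;=\; \mathcal{P}-\mathcal{N},
\end{equation*}
where $\mathcal{P}$ collects the terms listed on the right-hand side of the statement and $\mathcal{N}$ collects the remaining non-negative squared norms that carry a minus sign after rearrangement.

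Second, I would identify explicitly the squared-norm terms that end up in $\mathcal{N}$: these are $\|\mathcal{T}_{e_1}E_1\|^{2}$ coming from \eqref{curv2}, together with $\|\mathcal{A}_{\omega E_1}Pe_1\|^{2}$, $\|\mathcal{T}_{\phi E_1}Fe_1\|^{2}$, and $\|\mathcal{A}_{Fe_1}\phi E_1\|^{2}$ coming from \eqref{sect2} after sign flipping. Each is $\geq 0$, so dropping them weakens the right-hand side and yields the desired $\leq$ inequality, with the retained terms matching the six summands displayed in the statement.

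The main obstacle will be careful sign bookkeeping during the rearrangement, together with correctly handling the two covariant-derivative terms $g((\nabla_{Fe_1}\mathcal{T})(\phi E_1,\phi E_1),Fe_1)$ and $g((\nabla_{\omega E_1}\mathcal{T})(Pe_1,Pe_1),\omega E_1)$ from \eqref{sect2}: these have no a priori sign and must be absorbed into $g((\nabla_{E_1}\mathcal{T})(e_1,e_1),E_1)$ by viewing $E_1$ as decomposed into $\phi E_1+\omega E_1$ and $e_1$ into $Pe_1+Fe_1$ via \eqref{decompvervec}--\eqref{decomphorvec}, so that the compatibility of $\nabla$ with $J$ via \eqref{e10} reshuffles them into the single covariant derivative along $E_1$ as in the statement. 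A small additional point is the presence (or, in the statement as written, suppression) of the normalizing factors $\|Pe_1\|^{-2}\|\phi E_1\|^{-2}$ and $\|Fe_1\|^{-2}\|\omega E_1\|^{-2}$, which should be carried along exactly as in the preceding corollary.
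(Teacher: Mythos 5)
Your route is exactly the paper's: its proof consists of substituting the expression \eqref{curv2} for $K(E_1,e_1)$ into \eqref{sect2}, isolating the two curvature terms, and then passing to the inequality by discarding what is left over. However, your sign bookkeeping does not come out as you describe, and the step you identify as the "main obstacle" is precisely where both your sketch and the paper's own one-line conclusion are incomplete. Concretely: of the four terms you place in $\mathcal{N}$, three ($\|\mathcal{T}_{e_1}E_1\|^{2}$ from \eqref{curv2}, and $\|\mathcal{A}_{\omega E_1}Pe_1\|^{2}$, $\|\mathcal{A}_{Fe_1}\phi E_1\|^{2}$ from \eqref{sect2}) do acquire minus signs after the rearrangement and may be dropped; but $\|\mathcal{T}_{\phi E_1}Fe_1\|^{2}$ enters \eqref{sect2} with a minus sign, so once the curvature terms are moved to the left it reappears with a \emph{plus} sign. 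It therefore lands in your $\mathcal{P}$, not your $\mathcal{N}$, yet it is absent from the stated right-hand side, so discarding it pushes the estimate in the wrong direction.

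As for the two covariant-derivative terms $g((\nabla_{\omega E_1}\mathcal{T})(Pe_1,Pe_1),\omega E_1)$ and $g((\nabla_{Fe_1}\mathcal{T})(\phi E_1,\phi E_1),Fe_1)$: the paper does not absorb them into $g((\nabla_{E_1}\mathcal{T})(e_1,e_1),E_1)$ as you propose; it simply writes the equality obtained from \eqref{curv2} and \eqref{sect2} and asserts the corollary, silently omitting these sign-indefinite terms (along with the sign issue above). Your instinct that they are the sticking point is correct, but the absorption you sketch via the decompositions \eqref{decompvervec}--\eqref{decomphorvec} and \eqref{e10} is not carried out anywhere and is not obviously available, since $(\nabla_{E}\mathcal{T})(e,e)$ is quadratic in $e$ and the cross terms do not cancel for free. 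So your proposal reproduces the paper's argument in outline, and the gaps it leaves (one mis-signed squared norm, two undropped sign-indefinite terms) are gaps in the paper's own proof as well; to make the corollary rigorous one would have to either retain those terms on the right-hand side or impose hypotheses under which they vanish.
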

\begin{proof}
  Let $e_{1}\in ker\pi_{*}$ and $E_{1} \in ker\pi^{\perp}_{*} $ be orthonormal vector fields. Then, by \eqref{curv2} and \eqref{sect2}, we have
  \begin{eqnarray*}
g((\nabla_{E_{1}}\mathcal{T})(e_{1},e_{1}),E_{1})+\|\mathcal{A}_{E_{1}}e_{1}\|^{2}&=&\hat{K}(Pe_{1},\phi E_{1})\|Pe_{1}\|^{-2}\|\phi E_{1}\|^{-2}\nonumber \\
& & +K^{*}(Fe_{1},\omega E_{1})\|Fe_{1}\|^{-2}\|\omega E_{1}\|^{-2} \nonumber \\
& & -\|\mathcal{T}_{\phi E_{1}}Pe_{1}\|^{2}-\|\mathcal{T}_{P e_{1}}\omega E_{1}\|^{2}\nonumber \\
& &-3\|\mathcal{A}_{Fe_{1}}\omega E_{1}\|^{2}+\|\mathcal{A}_{\omega E_{1}}Pe_{1}\|^{2} \nonumber \\
& &-\|\mathcal{T}_{\phi E_{1}}Fe_{1}\|^{2}-g(\mathcal{T}_{Pe_{1}}Pe_{1},\mathcal{T}_{\phi E_{1}}\phi E_{1})\nonumber \\
& &+\|\mathcal{A}_{Fe_{1}}\phi E_{1}\|^{2}+\|\mathcal{T}_{e_{1}}E_{1}\|^{2}\nonumber \\
& &+g((\nabla_{\omega E_{1}}\mathcal{T})(Pe_{1},Pe_{1}),\omega E_{1}) \nonumber \\
& &+g((\nabla_{Fe_{1}}\mathcal{T})(\phi E_{1},\phi E_{1}),Fe_{1}).
  \end{eqnarray*}
Therefore, the assertion is obtained.
\end{proof}
\begin{corollary}
Let $\pi$ be a bi-slant submersion from a Kaehlerian manifold $(M,g,J)$ onto a Riemannian manifold $(N,g_{N})$. Then, we obtain
\[
  \begin{array}{ccc}
    \hat{K}(\phi E_{1},\phi E_{2})\|\phi E_{1}\|^{-2}\|\phi E_{2}\|^{-2} &   & g(\mathcal{T}_{\phi E_{1}}\phi E_{1},\mathcal{T}_{\phi E_{2}}\phi E_{2})+\|\mathcal{T}_{\phi E_{1}}\omega E_{2}\|^{2} \\
    +K^{*}(\omega E_{1},\omega E_{2})\|\omega E_{1}\|^{-2}\|\omega E_{2}\|^{-2} & \leq & + \|\mathcal{T}_{\phi E_{2}}\omega E_{1}\|^{2}+3\|\mathcal{A}_{\omega E_{1}}\omega E_{2}\|^{2} \\
    -K^{*}(E_{1},E_{2}) &   &  \\
  \end{array}
\]
\end{corollary}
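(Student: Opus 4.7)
The plan is to combine the two available expressions for the sectional curvature $K(E_1, E_2)$ of the total space on a horizontal plane, exactly as was done in the preceding two corollaries. First I would invoke \eqref{curv3}, which yields $K(E_1, E_2) = K^*(E_1, E_2) - 3\|\mathcal{A}_{E_1} E_2\|^2$, so the non-positivity of $-3\|\mathcal{A}_{E_1} E_2\|^2$ gives the pointwise estimate $K(E_1, E_2) \leq K^*(E_1, E_2)$.

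Next I would substitute the detailed expansion of $K(E_1, E_2)$ supplied by \eqref{sect3} into the left-hand side of this estimate. That expansion decomposes $K(E_1, E_2)$ along the orthogonal splittings $J E_i = \phi E_i + \omega E_i$, producing the two curvature quantities $\hat{K}(\phi E_1, \phi E_2)\|\phi E_1\|^{-2}\|\phi E_2\|^{-2}$ and $K^*(\omega E_1, \omega E_2)\|\omega E_1\|^{-2}\|\omega E_2\|^{-2}$ together with a collection of O'Neill-type correction terms. Isolating these two sectional-curvature quantities and $-K^*(E_1, E_2)$ on one side of the resulting inequality moves every correction term to the opposite side with reversed sign.

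On that other side one then finds precisely the four manifestly non-negative quantities $g(\mathcal{T}_{\phi E_1}\phi E_1, \mathcal{T}_{\phi E_2}\phi E_2)$, $\|\mathcal{T}_{\phi E_1}\omega E_2\|^2$, $\|\mathcal{T}_{\phi E_2}\omega E_1\|^2$ and $3\|\mathcal{A}_{\omega E_1}\omega E_2\|^2$ that appear as the claimed upper bound, plus a remainder consisting of the non-positive squares $-\|\mathcal{T}_{\phi E_2}\phi E_1\|^2$, $-\|\mathcal{A}_{\omega E_2}\phi E_1\|^2$, $-\|\mathcal{A}_{\omega E_1}\phi E_2\|^2$ and two covariant-derivative pairings $-g((\nabla_{\omega E_2}\mathcal{T})(\phi E_1, \phi E_2), \omega E_2)$ and $-g((\nabla_{\omega E_1}\mathcal{T})(\phi E_2, \phi E_2), \omega E_1)$. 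Discarding the three non-positive squares strengthens the inequality in the desired direction, and the assertion then reads off.

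The principal obstacle is the treatment of the two $(\nabla \mathcal{T})$ pairings, whose signs are a priori indeterminate. Following the convention adopted in the two preceding corollaries, where analogous indefinite covariant-derivative terms likewise appear and are absorbed into the final estimate, I would either work implicitly under the assumption that $\mathcal{T}$ is parallel along the relevant horizontal directions (so these terms vanish), or phrase the conclusion as an estimate modulo those remainder terms. Beyond this point, no new computation is required: every identity used is either \eqref{sect3}, \eqref{curv3}, or the non-negativity of a squared norm, and the proof proceeds by direct rearrangement.
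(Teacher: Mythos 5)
Your approach is exactly the paper's: equate the two expressions for $K(E_{1},E_{2})$ coming from \eqref{curv3} and \eqref{sect3}, isolate the three curvature quantities on one side, and discard the non-positive squares $-\|\mathcal{T}_{\phi E_{2}}\phi E_{1}\|^{2}$, $-\|\mathcal{A}_{\omega E_{2}}\phi E_{1}\|^{2}$, $-\|\mathcal{A}_{\omega E_{1}}\phi E_{2}\|^{2}$ (plus $-3\|\mathcal{A}_{E_{1}}E_{2}\|^{2}$, which you absorb at the first step). Your reservation about the two pairings $g((\nabla_{\omega E_{2}}\mathcal{T})(\phi E_{1},\phi E_{2}),\omega E_{2})$ and $g((\nabla_{\omega E_{1}}\mathcal{T})(\phi E_{2},\phi E_{2}),\omega E_{1})$ is well founded: the printed proof writes down the identity obtained from \eqref{curv3} and \eqref{sect3} and immediately declares the assertion obtained, silently dropping these sign-indefinite terms as though they were non-positive. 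So the inequality as stated is only justified under an additional hypothesis of the kind you suggest (e.g.\ $\mathcal{T}$ parallel in the relevant directions, or totally geodesic fibres), and your version is in this respect more careful than the paper's. One minor slip on your side: $g(\mathcal{T}_{\phi E_{1}}\phi E_{1},\mathcal{T}_{\phi E_{2}}\phi E_{2})$ is an inner product of two different vectors, not a squared norm, so it is not ``manifestly non-negative''; this is harmless, since that term remains on the upper-bound side rather than being discarded.
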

\begin{proof}
  Let $E_{1}, E_{2} \in ker\pi^{\perp}_{*} $ be orthonormal vector fields. From \eqref{curv3} and \eqref{sect3}, we get
 \begin{eqnarray*}
K^{*}(E_{1},E_{2})-3\|\mathcal{A}_{E_{1}}E_{2}\|^{2}&=&\hat{K}(\phi E_{1},\phi E_{2})\|\phi E_{1}\|^{-2}\|\phi E_{2}\|^{-2}\nonumber \\
& &+K^{*}(\omega E_{1},\omega E_{2})\|\omega E_{1}\|^{-2}\|\omega E_{2}\|^{-2}\nonumber \\
& &+\|\mathcal{T}_{\phi E_{2}}\phi E_{1}\|^{2}-g(\mathcal{T}_{\phi E_{1}}\phi E_{1},\mathcal{T}_{\phi E_{2}}\phi E_{2})\nonumber \\
& &+g((\nabla_{\omega E_{2}}\mathcal{T})(\phi E_{1},\phi E_{2}),\omega E_{2})-\|\mathcal{T}_{\phi E_{1}}\omega E_{2}\|^{2}\nonumber \\
& &+\|\mathcal{A}_{\omega E_{2}}\phi E_{1}\|^{2}+g((\nabla_{\omega E_{1}}\mathcal{T})(\phi E_{2},\phi E_{2}),\omega E_{1}) \nonumber \\
& &-\|\mathcal{T}_{\phi E_{2}}\omega E_{1}\|^{2}+\|\mathcal{A}_{\omega E_{1}}\phi E_{2}\|^{2}-3\|\mathcal{A}_{\omega E_{1}}\omega E_{2}\|^{2}.
 \end{eqnarray*}
Hence, the assertion is obtained.
\end{proof}


\end{document}